\title{\LARGE{\bfseries{On existence of thermally coupled incompressible
flows in a system of three dimensional pipes}}}
\date{}
\author{\large{
Michal Bene\v s\footnote{Department of Mathematics,
Faculty of Civil Engineering, Czech Technical University in Prague,
Th\'{a}kurova 7, 166 29 Prague 6, Czech Republic,
E-mail: benes@mat.fsv.cvut.cz}\;
and
Igor Pa\v{z}anin\footnote{Department of Mathematics, Faculty of Science, University of Zagreb,
Bijeni\v{c}ka 30, 10000 Zagreb, Croatia,
E-mail: pazanin@math.hr}
}}
 \newtheorem{thm}{Theorem}[section]
 \newtheorem{cor}[thm]{Corollary}
 \newtheorem{lem}[thm]{Lemma}
 \newtheorem{rem}[thm]{Remark}
\numberwithin{equation}{section}
\newtheorem{problem}[thm]{Problem}
\newenvironment{proof}[1][Proof.]{\begin{trivlist}
\item[\hskip \labelsep {\bfseries #1}]}{\end{trivlist}}
\newcommand{\om}{\Omega}
\newcommand{\pom}{\partial\Omega}
\newcommand{\dc}{\|}
\newcommand{\iom}{\int_\om}
\newcommand{\bfx}{\mbox{\boldmath{$x$}}}
\newcommand{\bfn}{\mbox{\boldmath{$n$}}}
\newcommand{\bff}{\mbox{\boldmath{$f$}}}
\newcommand{\bfu}{\mbox{\boldmath{$u$}}}
\newcommand{\bfw}{\mbox{\boldmath{$w$}}}
\newcommand{\bfv}{\mbox{\boldmath{$v$}}}
\newcommand{\bfphi}{\mbox{\boldmath{$\phi$}}}
\newcommand{\bfpsi}{\mbox{\boldmath{$\psi$}}}
\newcounter{constants}
\begin{document}
\maketitle

%-------------------------------------------------------------------------
% editorial commands: to be inserted by the editorial office
%
%\firstpage{1} \volume{228} \Copyrightyear{2004} \DOI{003-0001}
%
%
%\seriesextra{Just an add-on}
%\seriesextraline{This is the Concrete Title of this Book\br H.E. R and S.T.C. W, Eds.}
%
% for journals:
%
%\firstpage{1}
%\issuenumber{1}
%\Volumeandyear{1 (2004)}
%\Copyrightyear{2004}
%\DOI{003-xxxx-y}
%\Signet
%\commby{inhouse}
%\submitted{March 14, 2003}
%\received{March 16, 2000}
%\revised{June 1, 2000}
%\accepted{July 22, 2000}
%
%
%
%---------------------------------------------------------------------------
%Insert here the title, affiliations and abstract:
%

%\address{%
%Department of Mathematics\\
%Faculty of Civil Engineering\\
%Czech Technical University in Prague\\
%Th\'{a}kurova 7\\
%166 29 Prague 6\\
%Czech Republic}
%\email{benes@mat.fsv.cvut.cz}
%
%%\thanks{This work was completed with the support of our
%%\TeX-pert.}
%%----------Author 2
%\author{Igor Pa\v{z}anin}
%\address{
%Department of Mathematics\br
%Faculty of Science\br
%University of Zagreb\br
%Bijeni\v{c}ka 30\br
%10000 Zagreb\br
%Croatia
%}
%\email{pazanin@math.hr}
%----------classification, keywords, date
%\subjclass{Primary 35Q30; Secondary 35K05}

%\keywords{Navier-Stokes equations; heat equation;
%heat-conducting fluid; qualitative properties; mixed
%boundary conditions; nonlinear boundary conditions}

%\date{January 1, 2004}
%----------additions
%\dedicatory{To my boss}
%%% ----------------------------------------------------------------------

\begin{abstract}
We study an initial-boundary-value problem for time-dependent flows
of heat-conducting viscous incompressible fluids
in a system of three-dimensional pipes on a time interval $(0,T)$.
Here we are motivated by the bounded domain approach
with ``do-nothing'' boundary conditions.
In terms of the velocity, pressure and enthalpy of the fluid,
such flows are described by a parabolic system with
strong nonlinearities and including the artificial
boundary conditions for the velocity and nonlinear boundary
conditions for the so called enthalpy of the fluid.
The present analysis is devoted to the proof of the existence of weak solutions
for the above problem.
In addition, we deal with some regularity for the velocity of the fluid.
\end{abstract}

%%% ----------------------------------------------------------------------
%\maketitle
%%% ----------------------------------------------------------------------
%\tableofcontents

\section{Introduction}

Many problems of fluid thermo-mechanics involving unbounded domains occur in many areas
of applications, e.g. flows of a liquid in duct systems, fluid flows through a thin or long pipe
or through a system of pipes in hemodynamics and so on. From a numerical point of view,
these formulations are not convenient and quite practical.
Therefore, an efficient natural way  is to cut off unbounded parts of the domain
by introducing an artificial boundary in order to limit the computational work.
Then the original problem posed in an unbounded
domain is approximated by a problem in a smaller bounded computational region
with artificial boundary conditions prescribed at the cut boundaries.
Hence, let $\om$ be a bounded domain in $\mathbb{R}^3$ with
boundary $\partial \Omega$.
In a physical sense, $\Omega$ represents a
``truncated'' region of an unbounded system of pipes occupied by a
moving heat-conducting viscous incompressible fluid. ${\Gamma_1}$
will denote the ``lateral'' surface and ${\Gamma_2}$ represents the
open parts (cut boundaries) of the piping system.
It is physically reasonable to assume that in/outflow pipe segments extend as straight pipes.
More precisely,
$\Gamma_1$ and ${\Gamma_2}$ are ${C}^{\infty}$-smooth open
disjoint not necessarily connected subsets of $\pom$ such that
$\Gamma_2=\bigcup_{i\in \mathcal{J}} \Gamma^{(i)}_2$,
$\Gamma^{(i)}_2\cap\Gamma^{(j)}_2=\emptyset$ for $i\neq j$,
$\pom =
\overline{\Gamma}_1\cup\overline{\Gamma}_2$,
${\Gamma_1}\neq\emptyset$, ${\Gamma_2}\neq\emptyset$, $\mathcal{M} =
\partial\om-({\Gamma_1}\cup{\Gamma_2}) = \overline{\Gamma}_1
\cap\overline{\Gamma}_2 =
\bigcup_{i\in \mathcal{J}} \mathcal{M}_i$, $\mathcal{J} =
\left\{1,\dots,d\right\}$, and the $2$--dimensional measure of
$\mathcal{M}$ is zero and $\mathcal{M}_i$ are smooth nonintersecting
curves {(this
means that $\mathcal{M}_i$ are smooth curved nonintersecting
edges and vertices
(conical points) on
$\partial \Omega$ are excluded)}. Moreover, all portions
of ${\Gamma_2}$ are taken to be flat
and ${\Gamma_1}$ and ${\Gamma_2}$ form a right angle
$\omega_{\mathcal{M}}=\pi/2$ at all points of $\mathcal{M}$ (in the
sense of tangential planes), see Figure \ref{pipe}.
\begin{figure}[h]
\centering
\includegraphics[width=11.0cm]{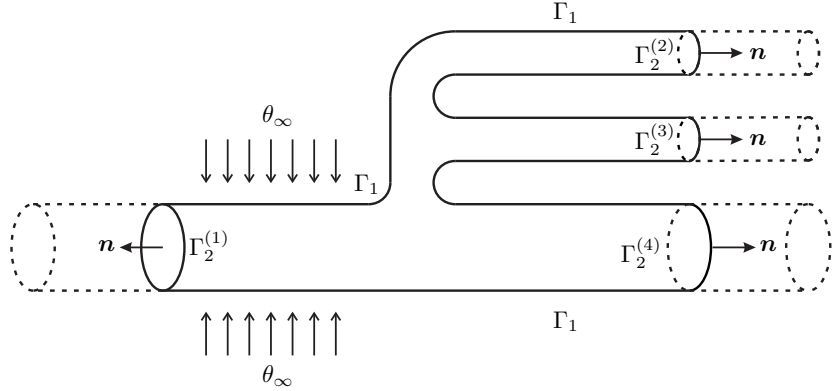}
\caption{Truncated piping system.}
\label{pipe}
\end{figure}
The flow of a viscous incompressible heat-conducting
fluid is governed by balance equations for linear
momentum, mass and internal energy \cite{diaz2007}:
\begin{align}
\varrho_0\left(\bfu_t + (\bfu\cdot\nabla)\bfu \right) - \nu\Delta \bfu
+ \nabla P
&= \rho(\theta)\bff,
\label{momentum}
\\
\nabla\cdot (\varrho_0\bfu)
&= 0,
\label{mass}
\\
c_v\rho(\theta)\left( \theta_t + \bfu\cdot\nabla\theta \right) - \lambda
\Delta \theta  -\nu \mathbb{D}(\bfu):\mathbb{D}(\bfu)
&= h.
\label{energy}
\end{align}
Here $\bfu=(u_1,u_2,u_3)$, $P$ and $\theta$ denote the unknown
velocity, pressure and temperature, respectively. Tensor
$\mathbb{D}(\bfu)$ denotes the symmetric part of the velocity gradient.
Data of the problem are as follows: $\bff$ is a body force and $h$ a
heat source term. Positive constant material coefficients represent
the kinematic viscosity $\nu$, reference density $\rho_0$, heat conductivity
$\lambda$ and specific heat at constant volume $c_v$.
Following the well-known Boussinesq approximation,
the temperature dependent density
is used in the energy equation \eqref{energy}
and to compute the buoyancy force $\rho(\theta)\bff$
on the right-hand side of equation \eqref{momentum}. Everywhere else in the model,
$\rho$ is replaced by the reference value $\varrho_0$.
Change of
density $\rho$ with temperature is given by
strictly positive, nonincreasing and continuous function, such that
\begin{equation}\label{con:rho}
0 < \rho_1 \leq   \rho(\xi) \leq \rho_2 < +\infty
\quad \forall \xi \in \mathbb{R}
\quad (\rho_1,\rho_2 = {\rm const}).
\end{equation}
The energy balance
equation \eqref{energy} takes into account the phenomena of the
viscous energy dissipation and adiabatic heat effects. For rigorous
derivation of the model like \eqref{momentum}--\eqref{energy} we
refer the readers to \cite{Kagei2000}. Rigorously derived asymptotic models describing stationary motion of heat-conducting incompressible viscous
fluid through pipe-like domains can be found in \cite{Pazanin1,Pazanin2}.

To complete the model, suitable boundary and initial conditions have to be added.
Concerning the boundary conditions of the flow, it is a standard
situation to prescribe a homogeneous no-slip boundary condition
for the velocity of the fluid on the fixed walls of the channel, i.e.
\begin{equation}\label{dirichlet_u}
\bfu= {\bf0}
\qquad
\textmd{ on } \Gamma_{1}.
\end{equation}
Since nothing is known in advance about the flow through the open parts,
it is really not clear what type of boundary
condition for the velocity should be prescribed on ${\Gamma_2}$.
The condition frequently used in numerical practice for viscous parallel flows
is the most simple outflow boundary condition of the form
\begin{equation}\label{do_nothing_nonhom}
-P \bfn_i+\nu\frac{\partial \bfu}{\partial \bfn_i}= F_i \bfn_i
\qquad
\textmd{ on }{\Gamma^{(i)}_2},
\qquad
i\in \mathcal{J},
\end{equation}
which seems to be natural
since it does not prescribe anything on the cut cross-section
of an in/outlets of the truncated region.
Therefore, this condition is usually called the ``do nothing''
(or ``free outflow'') boundary condition.
In \eqref{do_nothing_nonhom},
$\bfn_i = \bfn({\Gamma^{(i)}_2})$ is the outer unit normal vector to ${\Gamma^{(i)}_2}$,
$i\in \mathcal{J}$, while quantities $F_i$ are given functions.
In particular, for time-dependent flows, $F_i$ are given functions of time.
Boundary condition \eqref{do_nothing_nonhom} results from a
variational principle and does not have a real physical meaning.
For further discussion on theoretical aspects as well as practical difficulties
of this boundary condition and
the physical meaning of the quantities $F_i$
we refer to \cite{Benes2009,GaRaRoTu,Gresho,Hey,SaniGresho}.
\begin{rem}
Assume that $F_i$ are given smooth functions of time on ${\Gamma^{(i)}_2}$,
$i\in \mathcal{J}$,
and consider the smooth extension $F$ on ${\Omega \times {I}}$ such that
$F(\bfx,t)\big|_{{\Gamma^{(i)}_2}}\equiv F_i(t)$. Introducing the
new variable $\mathcal{P}=P+F$ this amounts to solving the problem
with the homogeneous ``do nothing'' boundary condition transferring the data
from the right-hand side of the boundary condition to the right-hand side
of the linear momentum balance equation.
Hence, for simplicity, we assume
throughout this paper, without loss of generality, that $F_i\equiv
0$, i.e.
\begin{equation}\label{homogeneous_neumann}
-P \bfn_i+\nu\frac{\partial \bfu}{\partial \bfn_i}= {\bf0}
\qquad
\textmd{ on }{\Gamma^{(i)}_2},
\qquad
i\in \mathcal{J}.
\end{equation}
\end{rem}
Concerning the heat transfer through the walls of pipes we consider
the Newton boundary condition
\begin{equation}\label{temp:newton}
-\lambda \frac{\partial \theta}{\partial \bfn}
=
\alpha(\theta - \theta_{\infty})
-
q_{\theta}
\qquad
\textmd{ on }\Gamma_{1},
\end{equation}
in which $\alpha$ designates the heat transfer coefficient,
${\theta}_\infty$ is the prescribed temperature outside the computational domain and
$q_{\theta}$ represents the heat flux imposed on the lateral surfaces.
On the open parts of the piping system we use the classical outflow
(``do nothing'') condition
\begin{equation}\label{temp:neumann}
\frac{\partial \theta}{\partial \bfn}
= 0
\qquad
\textmd{ on }\Gamma_{2}.
\end{equation}
Initial conditions are considered as the given initial velocity field $\bfu_0$
and the temperature profile $\theta_{0}$ over the flow domain
\begin{equation}\label{initial_u_theta}
\bfu(\bfx,0) = \bfu_0(\bfx)
\;  \textmd{ and } \;
\theta(\bfx,0) = \theta_0(\bfx)
\qquad
\textmd{ in } \Omega.
\end{equation}
\begin{rem}
Obtained results in this paper can be extended to problems with
Dirichlet or the mixed (Dirichlet-Neumann) boundary conditions for
the temperature on the walls.
Namely, instead of \eqref{temp:newton}, we can consider
($\Gamma_1 = \overline{\Gamma_3} \cup \overline{\Gamma_4}$,
${\Gamma_3} \cap {\Gamma_4} = \emptyset$)
\begin{align*}%\label{}
\theta &= \theta_D
\qquad
\textmd{ on }\Gamma_{3},
\\
-\lambda \frac{\partial \theta}{\partial \bfn}
&=
\alpha(\theta - \theta_{\infty})
-
q_{\theta}
\qquad
\textmd{ on }\Gamma_{4}.
\end{align*}
\end{rem}
The paper is organized as follows. In Section
\ref{sec:preliminaries}, we introduce basic notations and some
appropriate function spaces in order to precisely formulate our
problem. Furthermore,
we rewrite the energy equation by using the appropriate enthalpy transformation.
In Section \ref{main_result}, we present the strong
form of the model for the non-stationary motion of viscous
incompressible heat-conducting fluids in a system of 3D pipes considered in our
work, specify our smoothness assumptions on data and formulate the problem in a
variational setting.
We also provide the bibliographic remarks on the subject
and indicate what kind of difficulties we should overcome in the process.
The main result, the existence of strong-weak solutions,
stated at the end of Section \ref{main_result},
is proved in Section~\ref{sec:proof}.
The proof rests on application of Schauder fixed point theorem.
First, we present basic results on the
existence and uniqueness of solutions to auxiliary problems, the decoupled initial-boundary value
problems for the non-stationary Stokes system with mixed boundary conditions
and the parabolic convection-diffusion equation with the nonlinear boundary condition.
In the proof of the main result we rely on the energy estimates for auxiliary
problems, regularity of stationary solutions to the Stokes problem
and interpolations-like inequalities.

%%%%%%%%%%%%%%%%%%%%%%%%%%%%%%%%%%%%%%%%%%%%%%%%%%%%%%%%%%%%%%%%%%%%%%

%\section{Basic notation and some function
%spaces}
%\label{notation_spaces}
%
\section{Preliminaries. Enthalpy transformation}
\label{sec:preliminaries}
Vectors and vector functions are denoted by boldface letters.
Throughout the paper, we will always use positive constants $C$,
$c$, $c_1$, $c_2$, $\dots$, which are not specified and which may
differ from line to line but do not depend on
the functions under consideration.
Throughout this paper we suppose
$p,q,p'\in [1,\infty]$, $p'$ denotes the conjugate exponent
to $p>1$, ${1}/{p} + {1}/{p'} = 1$.
$L^p(\Omega)$ denotes the usual
Lebesgue space equipped with the norm $\|\cdot\|_{L^p(\Omega)}$ and
$W^{k,p}(\Omega)$, $k\geq 0$ ($k$ need not to be an integer, see
\cite{KufFucJoh1977}), denotes the usual
Sobolev-Slobodecki space with the norm $\|\cdot\|_{W^{k,p}(\Omega)}$.
In case of vector-valued functions we shall use the notation
$\boldsymbol L^p:= L^p(\Omega)^3$ and similarly for
other function spaces.
Recall that
$W^{0,p}(\Omega)=L^p(\Omega)$.
In the paper we shall use the following embedding theorems
(see \cite{AdamsFournier1992,KufFucJoh1977}):
\begin{equation}
\label{embedding_theorems}
\left\{
\begin{array}{lll}
W^{k,p}(\Omega) \hookrightarrow L^q(\Omega),
&
\|\varphi\|_{L^q(\Omega)} \leq c \,\|\varphi\|_{W^{k,p}(\Omega)},
&
1\leq q <\infty, \, kp=3,
\\
W^{k,p}(\Omega) \hookrightarrow L^q(\Omega),
&
\|\varphi\|_{L^q(\Omega)} \leq c \,\|\varphi\|_{W^{k,p}(\Omega)},
&
1\leq q \leq 3p/(3-kp),
\\
& & kp<3,
\\
W^{k,p}(\Omega) \hookrightarrow L^{\infty}(\Omega),
&
\|\varphi\|_{L^{\infty}(\Omega)} \leq c \,\|\varphi\|_{W^{k,p}(\Omega)},
&
kp>3
\end{array}\right.
\end{equation}
for every $\varphi \in W^{k,p}(\Omega)$.
{
By \cite{cwikel},
\begin{equation}\label{compact_embedding}
W^{\mu,2}(\Omega) \hookrightarrow\hookrightarrow W^{\mu-\varepsilon,2}(\Omega),
\qquad \textmd{ if }\mu \geq \varepsilon >0
\end{equation}
(the symbol ``$\hookrightarrow\hookrightarrow$'' denotes the compact embedding).}
Further, there exists
the continuous operator
$\mathcal{R}: W^{k,p}(\Omega) \rightarrow L^q(\partial\Omega)$,
such that
\begin{equation*}
\|\mathcal{R}(\varphi)\|_{L^q(\partial\Omega)}
\leq c
\,\|\varphi\|_{W^{k,p}(\Omega)}
\; \forall \varphi \in W^{k,p}(\Omega) \;
\left\{
\begin{array}{lll}
1 \leq p < 3/k,\,
q = \frac{3p-p}{3-kp}, %()/(),
\\
%\max \left\{ 1,3/k \right\}  \leq
p \geq \max \left\{ 1,3/k \right\},\,
q  \in [1,\infty).
\end{array}\right.
\end{equation*}
In what follows we often omit $\Omega$ in notations
of spaces and norms if it causes no ambiguity.
Unless specified otherwise, we use
Einstein's summation convention for indices running from $1$ to $3$.
Further, let
\begin{equation*}
{E}_u
:=
\left\{
\bfu\big|_{\Omega};\;\bfu\in C_c^\infty(\mathbb{R}^3)^3; \,
\nabla\cdot\bfu = 0,
\,
{\textmd{supp}\, \bfu}  \cap (\partial\Omega\backslash \Gamma_{2})
= \emptyset
\right\}
\end{equation*}
and ${V}_{{\Gamma_2}}^{k,p}$ be the closure of ${E}_u$ in the
norm of $(W^{k,p})^3$, $k\ge 0$ and $1\leq p \leq \infty$.
Then
${V}_{{\Gamma_2}}^{k,p}$
is a
Banach space with the norm of the space $(W^{k,p})^3$.
Further, define the space
\begin{equation}
{D} := \left\{\bfu \; |\; \bff \in {V}_{{\Gamma_2}}^{0,2},\,
a_u(\bfu,\bfv) =(\bff,\bfv)  \textmd{ for all } \bfv\in
{V}_{{\Gamma_2}}^{1,2} \right\} \label{D_u}
\end{equation}
equipped with the norm
\begin{equation*}%\label{norm_D_u}
\|\bfu\|_{{D}} := \|\bff\|_{{V}_{{\Gamma_2}}^{0,2}},
\end{equation*}
where $\bfu$ and $\bff$ are corresponding functions via \eqref{D_u}.
\begin{lem}\label{lemma:emb_D}
The following embedding holds for the space  $D$:
\begin{equation*}%\label{eq:emb_D}
D \hookrightarrow \mathbf{W}^{2,2},
\qquad
\|\bfu\|_{\mathbf{W}^{2,2}} \leqslant c(\Omega)\|\bfu\|_{D}
\quad \forall \bfu \in D.
\end{equation*}
\end{lem}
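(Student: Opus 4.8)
The plan is to identify $D$ as the velocity component of the solution space of the stationary Stokes system with homogeneous Dirichlet condition on $\Gamma_1$ and homogeneous ``do nothing'' condition on $\Gamma_2$, and to prove $\mathbf W^{2,2}$-regularity up to $\pom$ by a localisation argument; the only delicate point is the behaviour near the edges $\mathcal M_i$, where the type of the boundary condition changes. First I would recover the pressure. Given $\bfu\in D$ and the associated $\bff\in V^{0,2}_{\Gamma_2}$, the identity $a_u(\bfu,\bfv)=(\bff,\bfv)$ for all solenoidal $\bfv\in V^{1,2}_{\Gamma_2}$, combined with a standard de Rham-type argument for the mixed boundary condition, yields $P\in L^2(\om)$ with $-\nu\Delta\bfu+\nabla P=\bff$, $\nabla\cdot\bfu=0$ in $\om$, $\bfu=\bfzero$ on $\Gamma_1$ and $-P\bfn_i+\nu\,\partial\bfu/\partial\bfn_i=\bfzero$ on $\Gamma_2^{(i)}$ in the weak sense. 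Coercivity of $a_u$ on $V^{1,2}_{\Gamma_2}$ — available from Poincar\'e's inequality because $\Gamma_1\neq\emptyset$ — gives $\|\bfu\|_{W^{1,2}}\le c(\om)\|\bff\|_{V^{0,2}_{\Gamma_2}}$, and hence $\|P\|_{L^2}\le c(\om)\|\bff\|_{V^{0,2}_{\Gamma_2}}$.

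Next I would cover $\overline{\om}$ by finitely many open sets of four kinds: (i) balls compactly contained in $\om$; (ii) balls centred at points of $\Gamma_1\setminus\overline{\mathcal M}$; (iii) balls centred at points of $\Gamma_2\setminus\overline{\mathcal M}$; (iv) balls centred at points of $\mathcal M$. For a subordinate smooth partition of unity $\{\varphi\}$, each product $\varphi\bfu$ solves a Stokes system $-\nu\Delta(\varphi\bfu)+\nabla(\varphi P)=\mathbf g$, $\nabla\cdot(\varphi\bfu)=\nabla\varphi\cdot\bfu$, where $\mathbf g=\varphi\bff-\nu(\Delta\varphi)\bfu-2\nu(\nabla\varphi\cdot\nabla)\bfu+P\nabla\varphi\in\mathbf L^2(\om)$ and the continuity-equation inhomogeneity lies in $W^{1,2}(\om)$; by the first step both are bounded by $c(\om)\|\bff\|_{V^{0,2}_{\Gamma_2}}$. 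On patches of kinds (i) and (ii) the classical interior, respectively up-to-a-smooth-Dirichlet-boundary, $\mathbf W^{2,2}$-estimates for the Stokes system apply directly. On patches of kind (iii) the ``do nothing'' boundary value problem for the Stokes system satisfies the complementing (Agmon--Douglis--Nirenberg) condition along the smooth portion $\Gamma_2$, so standard elliptic regularity again yields a $\mathbf W^{2,2}$-bound up to $\Gamma_2$.

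The essential step, and the only place where the hypotheses ``$\Gamma_2$ flat'', ``in/outflow segments straight'' and ``$\omega_{\mathcal M}=\pi/2$'' are used, is a patch of kind (iv). Using the signed distance functions $d_1$ to $\Gamma_1$ and $d_2$ to $\Gamma_2$ together with an arclength parameter along $\mathcal M$ as new coordinates, one straightens such a patch onto a neighbourhood of the origin in the wedge $\{d_1>0,\ d_2>0\}$, carrying $\Gamma_1$ to $\{d_1=0\}$ and $\Gamma_2$ to the flat face $\{d_2=0\}$ \emph{simultaneously}, the right angle being preserved along $\mathcal M$; the transported system is a Stokes-type system with smooth coefficients that coincide with constant ones on $\mathcal M$. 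Now reflect across the flat Dirichlet face $\{d_1=0\}$, extending the components of $\bfu$ tangential to it and the pressure oddly and the normal component evenly. Since the Dirichlet condition is homogeneous and $\nabla\cdot\bfu=0$, this extension belongs to $\mathbf W^{1,2}$ and is a weak solution across the interface; the decisive point is that the reflection leaves the homogeneous ``do nothing'' condition on $\{d_2=0\}$ invariant. One is thus left with a Stokes-type problem on a half-space carrying only the ``do nothing'' condition on a single flat face and having no edge, to which the estimate of kind (iii) applies after freezing the coefficients on $\mathcal M$ and absorbing the small remainder on a sufficiently small ball. Transforming back and summing the local bounds against the partition of unity gives $\|\bfu\|_{\mathbf W^{2,2}}\le c(\om)\bigl(\|\bff\|_{V^{0,2}_{\Gamma_2}}+\|\bfu\|_{W^{1,2}}+\|P\|_{L^2}\bigr)\le c(\om)\|\bfu\|_{D}$, which is the assertion. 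I expect the edge analysis to be the main obstacle: without the right angle the mixed problem generically develops edge singularities that destroy $\mathbf W^{2,2}$-regularity, and it is the flatness of $\Gamma_2$ that makes the reducing reflection available.
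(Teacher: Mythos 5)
The paper does not prove this lemma at all: it simply cites \cite[Lemma 2.1 and Appendix A]{Benes2014a}, where the decisive point --- regularity near the edge $\mathcal{M}$ where the Dirichlet and ``do nothing'' conditions meet at the angle $\pi/2$ --- is obtained from the theory of elliptic boundary value problems in domains with edges (localisation along $\mathcal{M}$ plus control of the spectrum of the operator pencil associated with the two-dimensional Dirichlet/Neumann wedge of opening $\pi/2$). Your overall architecture (pressure recovery, partition of unity, interior and smooth-boundary ADN estimates for patches of kinds (i)--(iii)) is sound and standard, and you correctly identify the edge patches as the only real issue.

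However, your treatment of the edge patches has a genuine gap: the component-wise reflection across the flat Dirichlet face does not produce a weak solution of a Stokes-type system across the interface. Extending the tangential velocity components oddly and the normal component evenly is consistent with $\bfu=\bfzero$ and $\nabla\cdot\bfu=0$ on $\Gamma_1$, but the odd extension of the pressure is discontinuous across the face (its jump is $2P|_{\Gamma_1}$, and there is no reason for $P$ to vanish on the no-slip wall), so the normal component of the momentum equation for the extended pair acquires a single-layer distribution $2P|_{\Gamma_1}\,\delta_{\Gamma_1}\,\bfn$. Equivalently, in the weak formulation the extended field fails to satisfy the variational identity for divergence-free test functions whose normal component does not vanish on the interface. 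A single layer on the interface caps the regularity of the extension at $\mathbf{W}^{3/2,2}$ near the face, so interior (or kind-(iii)) elliptic regularity applied to the extension cannot return the $\mathbf{W}^{2,2}$ bound you need; choosing the opposite parities, or reflecting instead across the ``do nothing'' face, runs into the same obstruction because the condition $-P\bfn+\partial\bfu/\partial\bfn=\bfzero$ couples $P$ to $\partial u_n/\partial n$. This is precisely why the no-slip Stokes boundary estimate is normally proved by tangential difference quotients rather than reflection, and why the edge case cannot be reduced away by a symmetry argument: one genuinely needs either difference quotients along $\mathcal{M}$ combined with a two-dimensional corner (Kondrat'ev-type) analysis in the cross-sectional plane, or the weighted-space edge theory invoked in \cite{Benes2014a}, where the hypotheses ``$\Gamma_2$ flat'' and ``$\omega_{\mathcal{M}}=\pi/2$'' enter through the location of the pencil eigenvalues rather than through a reflection.
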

\begin{proof}
For the proof see \cite[Lemma 2.1 and Appendix A]{Benes2014a}.
$\square$
\end{proof}

%------------------------------------------------------------------------------------------------------
In what follows we apply a so-called enthalpy transformation
$e=\mathcal{E}(\theta)$, defined by
\begin{displaymath}%\label{}
\mathcal{E}(\xi) = \int\limits_{0}^{\xi} c_v\rho(s)
{\rm d}s.
\end{displaymath}
In view of \eqref{con:rho}, $\mathcal{E}(\cdot)$ is strictly increasing,
so the transformation is bijective with $\beta:=\mathcal{E}^{-1}$
Lipschitz continuous,
$\beta(0)=0$.
There exists a positive constant $C_{\beta}$ such that
\begin{equation}\label{cont_beta}
|\beta(\zeta_1) - \beta(\zeta_2)| \leq C_{\beta} |\zeta_1-\zeta_2|
\quad \forall \zeta_1,\zeta_2 \in \mathbb{R}
\end{equation}
and
\begin{equation}\label{monotony_beta}
0 <  (\beta(\zeta_1) - \beta(\zeta_2))(\zeta_1-\zeta_2)
\quad \forall \zeta_1,\zeta_2 \in \mathbb{R}, \; \zeta_1 \neq \zeta_2.
\end{equation}

Now we set $e(\bfx):=\mathcal{E}(\theta(\bfx))$ for $\bfx\in\Omega$
and introduce the following notation
\begin{align}
\beta(e) &= \theta,
\label{beta}
\\
\varrho({e}) &= \rho(\beta(e)),
\label{varrho}
\\
e_0 &= \mathcal{E}(\theta_0).
\label{}
\end{align}
In terms of $e$, the energy equation \eqref{energy} takes the form
\begin{equation*}
{e}_t + \bfu\cdot\nabla{e}
- \nabla  \left(\kappa({e})\nabla{e}\right)
- \nu\mathbb{D}(\bfu):\mathbb{D}(\bfu)
=
h,
\end{equation*}
where
\begin{equation}\label{kappa}
\kappa({e}) = \frac{\lambda}{c_v\rho(\mathcal{E}^{-1}(e))}.
\end{equation}
Here we have, in view of \eqref{con:rho},
\begin{equation}\label{con:kappa}
0 < \kappa_1 \leq   \kappa(\xi) \leq \kappa_2 < +\infty \quad
\forall \xi \in \mathbb{R}
\quad
\left(\kappa_1= \frac{\lambda}{c_v\rho_2},\;\kappa_2
= \frac{\lambda}{c_v\rho_1}\right).
\end{equation}
To simplify mathematical formulations, we introduce the following
notations:
\begin{align}
a_u(\bfu,\bfv) & :=   \iom \frac{\partial u_i}{\partial x_j}
\frac{\partial v_i}{\partial x_j}\,{{\rm d}\Omega},
\label{form_a}
\\
b_u(\bfu,\bfv,\bfw)& := \iom u_j{\frac{\partial v_i}{\partial x_j}}
w_i\,{{\rm d}\Omega},\label{form_b}
 \\
a_{e}(\eta, \phi,\varphi) &
:=
\iom  \eta
\frac{\partial \phi}{\partial x_i}  \frac{\partial \varphi}{\partial x_i}
\,{{\rm d}\Omega},
\label{form_c}
 \\
b_e(\bfu,\phi,\varphi)& :=
\iom
u_i \frac{\partial \phi}{\partial x_i}
 \,\varphi \,{{\rm d}\Omega},
 \label{form_d}
 \\
 \gamma(\phi,\varphi) & := \int_{\Gamma_1}  \mathcal{R}(\phi)  \mathcal{R}(\varphi)   \,{\rm d}{S},
  \label{form_gamma}
 \\
d(\bfu,\bfv,\varphi) & :=  \iom  d_{ij}(\bfu) d_{ij}(\bfv) \varphi
\,{{\rm d}\Omega}, \label{form_e}
 \\
(\bfu,\bfv) &:=  \iom u_i  v_i \,{{\rm d}\Omega},
 \label{scalar_Lu}
\\
(\phi,\varphi)_{\Omega} & :=  \iom \phi \varphi \,{{\rm d}\Omega},
\label{scalar_Lt}
\\
\langle g , \varphi \rangle &  :=
\int_{\Gamma_1} ( \alpha \theta_{\infty} + q_e ) \varphi   \,{\rm d}{S}
+
\iom h \varphi \,{{\rm d}\Omega}.
\label{functional_g}
\end{align}
In \eqref{form_a}--\eqref{functional_g} all functions
$\bfu,\bfv,\bfw,\eta,\phi,\varphi, \theta_{\infty},q_e, h$
are smooth enough, such that all
integrals on the right-hand sides make sense. In \eqref{form_e},
$d_{ij}(\bfu)$ denotes the components of the tensor $\mathbb{D}(\bfu)$
defined by
\begin{displaymath}
d_{ij}(\bfu)=\frac{1}{2}\left(\frac{\partial u_i}{\partial
x_j}+\frac{\partial u_j}{\partial x_i}\right), \qquad i,j=1,2,3.
\end{displaymath}
Let $T\in(0,\infty)$ be fixed throughout the paper, $I:=(0,T)$.
Let $E$ be a Banach space,
by $L^p(I;E)$ we denote the Bochner space (see \cite{AdamsFournier1992}).
The following compactness result for spaces involving time
was established by J.P.~Aubin (see
\cite{Aubin1963}) and will be crucial to prove the main proposition of the paper.
\begin{thm}\label{aubin_compact}
Let $\mathcal{B}_0$, $\mathcal{B}$, $\mathcal{B}_1$ be three Banach
spaces where $\mathcal{B}_0$, $\mathcal{B}_1$ are reflexive. Suppose
that $\mathcal{B}_0$ is continuously imbedded into $\mathcal{B}$,
which is also continuously imbedded into $\mathcal{B}_1$, and
imbedding from $\mathcal{B}_0$ into $\mathcal{B}$ is compact. For
any given $p_0$, $p_1$ with $1<p_0,p_1<\infty$, let
\begin{displaymath}
\mathcal{W}:=\left\{v ; \; v\in L^{p_0}(0,T;\mathcal{B}_0), \;
v_t\in L^{p_1}(0,T;\mathcal{B}_1)  \right\}.
\end{displaymath}
Then the imbedding from $\mathcal{W}$ into
$L^{p_0}(0,T;\mathcal{B})$ is compact.
\end{thm}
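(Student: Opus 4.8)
The statement is (a form of) the classical Aubin--Lions--Simon compactness lemma, and the plan is to prove sequential compactness directly: given an arbitrary sequence $(v_n)$ bounded in $\mathcal{W}$, I extract a subsequence converging strongly in $L^{p_0}(0,T;\mathcal{B})$.

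The first ingredient is an Ehrling-type interpolation inequality: for every $\eta>0$ there exists $C_\eta>0$ with
\[
\|w\|_{\mathcal{B}} \leq \eta\,\|w\|_{\mathcal{B}_0} + C_\eta\,\|w\|_{\mathcal{B}_1} \qquad \text{for all } w\in\mathcal{B}_0 .
\]
I would prove this by contradiction: a failure at some $\eta_0$ yields $w_n$ with $\|w_n\|_{\mathcal{B}}=1$, $(\|w_n\|_{\mathcal{B}_0})_n$ bounded and $\|w_n\|_{\mathcal{B}_1}\to 0$; the compact embedding $\mathcal{B}_0\hookrightarrow\hookrightarrow\mathcal{B}$ then gives a subsequence converging in $\mathcal{B}$, hence in $\mathcal{B}_1$, to a limit of $\mathcal{B}$-norm one, contradicting $\|w_n\|_{\mathcal{B}_1}\to 0$. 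Next, since $1<p_0,p_1<\infty$ and $\mathcal{B}_0,\mathcal{B}_1$ are reflexive, the Bochner spaces $L^{p_0}(0,T;\mathcal{B}_0)$ and $L^{p_1}(0,T;\mathcal{B}_1)$ are reflexive, so from the bounded sequence $(v_n)$ I pass to a subsequence with $v_n\rightharpoonup v$ in $L^{p_0}(0,T;\mathcal{B}_0)$ and $\partial_t v_n\rightharpoonup \partial_t v$ in $L^{p_1}(0,T;\mathcal{B}_1)$, and, replacing $v_n$ by $v_n-v$, I may assume $v=0$. Applying the Ehrling inequality for a.e.\ $t$ and integrating in time,
\[
\|v_n\|_{L^{p_0}(0,T;\mathcal{B})} \leq \eta\,\|v_n\|_{L^{p_0}(0,T;\mathcal{B}_0)} + C_\eta\,\|v_n\|_{L^{p_0}(0,T;\mathcal{B}_1)} \leq \eta M + C_\eta\,\|v_n\|_{L^{p_0}(0,T;\mathcal{B}_1)},
\]
where $M$ bounds $(\|v_n\|_{L^{p_0}(0,T;\mathcal{B}_0)})_n$. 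Hence it suffices to prove $v_n\to 0$ strongly in $L^{p_0}(0,T;\mathcal{B}_1)$ and then let $\eta\to0$.

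For that remaining convergence I would run an Arzel\`a--Ascoli argument in $C([0,T];\mathcal{B}_1)$. Each $v_n$ has an absolutely continuous $\mathcal{B}_1$-valued representative since $v_n,\partial_t v_n\in L^{1}(0,T;\mathcal{B}_1)$, and Hölder's inequality in time with the bound on $\partial_t v_n$ in $L^{p_1}(0,T;\mathcal{B}_1)$, $p_1>1$, gives $\|v_n(t)-v_n(s)\|_{\mathcal{B}_1}\leq C\,|t-s|^{1/p_1'}$, so $(v_n)$ is equicontinuous as a family of $\mathcal{B}_1$-valued curves. To see that $\{v_n(t):n\in\mathbb{N}\}$ is precompact in $\mathcal{B}_1$ for each fixed $t$, I use the time averages $y_n^h:=\tfrac1h\int_t^{t+h}v_n(\tau)\,d\tau$ (and $\tfrac1h\int_{t-h}^{t}$ near $t=T$): for fixed $h$ these are bounded in $\mathcal{B}_0$ uniformly in $n$ (Hölder in time against the $L^{p_0}(0,T;\mathcal{B}_0)$ bound), hence precompact in $\mathcal{B}$ and a fortiori in $\mathcal{B}_1$, while the equicontinuity estimate forces $\sup_n\|v_n(t)-y_n^h\|_{\mathcal{B}_1}\to0$ as $h\to0$; thus $\{v_n(t):n\}$ is totally bounded, i.e.\ precompact, in the Banach space $\mathcal{B}_1$. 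Arzel\`a--Ascoli then yields a further subsequence converging in $C([0,T];\mathcal{B}_1)$, and since $v_n\rightharpoonup 0$ in $L^{p_0}(0,T;\mathcal{B}_0)\hookrightarrow L^{p_0}(0,T;\mathcal{B}_1)$ the limit must be $0$; hence $v_n\to0$ in $C([0,T];\mathcal{B}_1)$ and therefore in $L^{p_0}(0,T;\mathcal{B}_1)$, which finishes the proof.

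The step I expect to be the main obstacle is exactly the pointwise-in-time precompactness of $\{v_n(t)\}$ in $\mathcal{B}_1$: an individual time slice of a function in $L^{p_0}(0,T;\mathcal{B}_0)$ carries no quantitative bound, so one must genuinely combine the time-averaging (to recover $\mathcal{B}_0$-boundedness and, through the compact embedding, $\mathcal{B}$-precompactness) with the control of $\partial_t v_n$ (to estimate the averaging error), and it is here that all three spaces and both exponents are used. The rest is routine bookkeeping, the only mild care being the identification of the absolutely continuous representatives and the passage between weak convergence in the $\mathcal{B}_0$- and $\mathcal{B}_1$-valued Bochner spaces.
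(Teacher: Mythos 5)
Your argument is correct and complete in all essentials; the only thing to note is that the paper itself offers no proof of this statement at all --- Theorem \ref{aubin_compact} is quoted as a known result with a citation to Aubin's 1963 note, so there is no ``paper proof'' to match yours against. What you have written is the standard modern proof of the Aubin--Lions lemma (in the style of Lions and Simon): the Ehrling interpolation inequality $\|w\|_{\mathcal{B}}\leq \eta\|w\|_{\mathcal{B}_0}+C_\eta\|w\|_{\mathcal{B}_1}$ reduces strong $L^{p_0}(0,T;\mathcal{B})$-convergence to strong convergence in the weakest space $L^{p_0}(0,T;\mathcal{B}_1)$, reflexivity of the Bochner spaces (here is where $1<p_0,p_1<\infty$ and reflexivity of $\mathcal{B}_0$, $\mathcal{B}_1$ enter) lets you normalize the weak limit to zero, and the Arzel\`a--Ascoli step with the time averages $y_n^h=\frac1h\int_t^{t+h}v_n$ is exactly the right device: the H\"older bound $\|v_n(t)-v_n(s)\|_{\mathcal{B}_1}\leq C|t-s|^{1/p_1'}$ gives equicontinuity and controls $\|v_n(t)-y_n^h\|_{\mathcal{B}_1}$ uniformly in $n$, while the $\mathcal{B}_0$-bound on $y_n^h$ for fixed $h$ feeds the compact embedding. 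You correctly identify the pointwise-in-time precompactness as the crux, and your total-boundedness argument there is sound. The one point worth making explicit if you were to write this out in full is that the weak limit $v$ of the original sequence indeed lies in $\mathcal{W}$ (i.e.\ that the weak limit of $\partial_t v_n$ in $L^{p_1}(0,T;\mathcal{B}_1)$ is the distributional derivative of $v$), which justifies replacing $v_n$ by $v_n-v$; this is routine but is precisely where reflexivity of $\mathcal{B}_1$ is used. In short: a correct, self-contained proof of a result the paper only cites.
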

\begin{rem}
To simplify the notation, in the sequel we normalize material
constants $\alpha$, $\rho_0$, $\nu$, $\lambda$ and $c_v$ to one.
\end{rem}
% ----------------------------------------------------------------
\section{Variational formulation and the main result}
\label{main_result}
In view of \eqref{beta}--\eqref{kappa},
system \eqref{momentum}--\eqref{initial_u_theta} for
the new variables $(\bfu, P, e)$
transforms to
\begin{align}
\bfu_t + (\bfu\cdot\nabla)\bfu  - \Delta \bfu
+ \nabla{P}
& = \varrho({e})\bff
&&
\textmd{in}\; {\Omega \times {I}},
\label{eq3}
\\
\nabla\cdot\bfu &= 0
&&
\textmd{in}\;{\Omega \times {I}},
\label{eq5}
\\
{e}_t + \bfu\cdot\nabla{e}
- \nabla  \left(\kappa({e})\nabla{e}\right)
 - \mathbb{D}(\bfu):\mathbb{D}(\bfu)
& = h
&&
\textmd{in}\;{\Omega \times {I}},
\label{heat equation}
\\
\bfu
& = {\bf 0}
&& \textmd{on} \;\Gamma_{1}  \times {I},
\label{eq6}
\\
-\kappa({e})\frac{\partial{e}}{\partial\bfn} &= \beta(e) + q_e
&&
\textmd{on}\; \Gamma_{1}  \times {I},
\label{boundary temperature}
\\
-{P}\bfn+\frac{\partial\bfu}{\partial\bfn}&= {\bf0}
&&\textmd{on} \; \Gamma_{2}  \times {I},
\label{eq7}
\\
\frac{\partial{e}}{\partial\bfn} &= 0
&&
\textmd{on}\; \Gamma_2  \times {I},
\label{boundary temperature2}
\\
\bfu(\bfx,0) &= \bfu_0(\bfx)
&&
\textmd{in} \; \Omega,
\label{eq8}
\\
e(\bfx,0) &={e}_0(\bfx)
&&
\textmd{in} \; \Omega .
\label{init_temp}
\end{align}
We suppose that all functions in \eqref{eq3}--\eqref{init_temp} are
smooth enough.

\bigskip

At this point we can formulate our problem in  a variational sense:
\begin{problem}
 Suppose that
\begin{align*}%\label{assump_f}
& \bff \in L^{2}(I;{V}_{{\Gamma_2}}^{0,2}),
\quad
g \in L^2({I}; {W}^{-1,2}),
%\\
%& e_D \in L^2({I};W^{1,2}(\Omega)),\quad  (e_D)_t \in L^2({I}; {W}^{-1,2}),
%\quad e_D(0) \in W^{1,2}(\Omega)
\nonumber
\\
& \bfu_0 \in {V}_{{\Gamma_2}}^{1,2}, \quad e_0 \in L^{2}.
%\quad
%e_0 - e_D(0) \in {W}^{1,2}
\nonumber
\end{align*}
Find a pair $[\bfu,e]$ such that
\begin{align*}
& \bfu_t \in L^2( {I};{V}_{{\Gamma_2}}^{0,2}),
\quad
\bfu \in L^2({I};{D}) \cap \mathcal{C}({I}; {V}_{{\Gamma_2}}^{1,2}),
\\
& e_t \in L^2( {I};{W}^{-1,2}),
\quad
e \in L^2({I};{W}^{1,2}) \cap \mathcal{C}({I};L^2)
\end{align*}
and the following system
\begin{eqnarray*}
(\bfu_t,\bfv)
+
a_u(\bfu,\bfv)
+
b_u(\bfu,\bfu,\bfv)
&=&
( \varrho(e)\bff,\bfv ),
\\
\langle{e}_t,\varphi\rangle
+
a_{e}(\kappa(e),e,\varphi)
+
b_e(\bfu,{e},\varphi)
\\
+
\gamma(\beta(e),\varphi)
-
d(\bfu,\bfu,\varphi)
&=&
\langle g,\varphi\rangle
\end{eqnarray*}
holds for every $[\bfv,\varphi] \in {V}_{{\Gamma_2}}^{1,2} \otimes
{W}^{1,2}$ and for almost every $t\in{I}$ and
\begin{align*}
\qquad \qquad \qquad \bfu(\bfx,0) & =  \bfu_0(\bfx)
&& \textmd{ in } \Omega,
\\
e(\bfx,0) & = {e}_0(\bfx)
&& \textmd{ in } \Omega.
\end{align*}
The pair $[\bfu,{e}]$ is called the strong-weak solution to the system
\eqref{eq3}--\eqref{init_temp}.
\end{problem}

\begin{rem}
The main advantage of the formulation of the Navier-Stokes
equations in free divergence spaces is that the pressure
$P$ is eliminated from the
system.
Having $\bfu$ in hand,
this unknown can be recovered  in the same way as in \cite{Kucera2009}.
\end{rem}

Let us briefly describe some difficulties we have to solve in our
work. The equations  \eqref{eq3}--\eqref{heat equation} represent
the system with strong nonlinearities (quadratic growth of
$\nabla\bfu$ in dissipative term $\mathbb{D}(\bfu):\mathbb{D}(\bfu)$)
without
appropriate general existence and regularity theory. In
\cite{frehse}, Frehse presented a simple example of discontinuous
bounded weak solution $\bfw\in L^{\infty}\cap H^1$ of the nonlinear
elliptic system of the type $\Delta\bfw=B(\bfw,\nabla\bfw)$, where
$B$ is analytic and has quadratic growth in $\nabla\bfw$. However,
for scalar problems, such existence and regularity theory is well
developed (cf. \cite{LadUr,LadSolUr}).

Nevertheless, the main (open) problem of the system
\eqref{eq3}--\eqref{init_temp} consists in the fact that, because of
the boundary condition \eqref{eq7}, we cannot prove that
$b(\bfu,\bfu,\bfu)=0$. Consequently, we are not able to show that
the kinetic energy of the fluid is controlled by the data of the
problem and solutions of \eqref{eq3}--\eqref{init_temp} need not
satisfy the energy inequality. This is due to the fact that some
uncontrolled ``backward flow'' can take place at the open parts
${\Gamma_2}$ of the domain $\Omega$ and one is not able to prove
global existence results. In
\cite{Kracmar2002}--\cite{KraNeu5}, Kra\v cmar and Neustupa
prescribed an additional condition on the output (which bounds the
kinetic energy of the backward flow) and formulated steady and
evolutionary Navier-Stokes problems by means of appropriate
variational inequalities. In \cite{KuceraSkalak1998}, Ku\v cera and
Skal\' ak proved the local-in-time existence and uniqueness of a
variational solution of the Navier-Stokes equations for
iso-thermal fluids, such that
\begin{equation*}\label{reg_kucera_skalak}
\bfu_t\in L^2({I}^*; {V}_{{\Gamma_2}}^{1,2}),
\quad \bfu_{tt}\in L^2({I}^*;{V}_{{\Gamma_2}}^{-1,2}), \quad 0<T^*\leq T,
\end{equation*}
under some smoothness restrictions on $\bfu_0$ and ${P}$. In
\cite{SkalakKucera2000}, the same authors established similar
results for the Boussinesq approximations of the
heat--conducting incompressible fluids. In \cite{Kucera2009},
Ku\v{c}era supposed that the ``do nothing'' problem for the
Navier-Stokes system is solvable in suitable function class with
some given data. The
author proved that there exists a unique solution for data which are
small perturbations of the original ones.

In case of isothermal flows, in \cite{Benes2011c},
the first author of the present paper proved
local-in-time existence and uniqueness
of regular solutions to isothermal Navier-Stokes flows for Newtonian fluids
in three-dimensional non-smooth domains
with various types of boundary conditions,
such that
$\bfu\in L^2(0,T^*;{D})$,  ${D}\hookrightarrow W^{2,2}(\Omega)^3$,
which is regular in the sense that solutions possess second spatial derivatives.
In \cite{Benes2014a}, the same author proved the local-in-time existence,
global uniqueness and smoothness of the solution of an initial-boundary-value problem
for Boussinesq flows in three-dimensional channel-like domains
excluding viscous dissipation and considering constant density in the energy balance equation.
In case of corresponding stationary flows, the existence, uniqueness and regularity of the solution has been recently proved in \cite{Benes2014b}.
In this paper, we extend the existence result from \cite{Benes2014b} to non-stationary problems.
The following theorem represents the main result of this paper.
\begin{thm}\label{theorem:existence_result}
Assume
\begin{align*}
& \bff \in L^{2}({I};{V}_{{\Gamma_2}}^{0,2}),
\quad
g \in L^2({I}; {W}^{-1,2}),
\\
& \bfu_0 \in {V}_{{\Gamma_2}}^{1,2}, \quad e_0 \in L^{2}(\Omega),
\end{align*}
and
\begin{equation}\label{main_cond}
\frac{1}{4 C_S^2 T^{1/8}}
\geq
\varrho_2 \|\bff\|_{L^{2}({I};{V}_{{\Gamma_2}}^{0,2})}
+
\|\bfu_0\|_{{V}_{{\Gamma_2}}^{1,2}}.
\end{equation}
Then there exists the strong-weak solution $[\bfu,{e}]$ to problem
\eqref{eq3}--\eqref{init_temp}. In \eqref{main_cond}, $C_S$ is
some specific constant depending on $\Omega$ (cf. \eqref{est:max_reg_stokes_2}).
\end{thm}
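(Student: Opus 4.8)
The plan is to prove Theorem~\ref{theorem:existence_result} by a Schauder fixed point argument, decoupling the momentum equation from the enthalpy equation exactly as announced in the introduction. First I would set up the fixed point map $\mathcal{F}$ acting on a suitable closed, convex, bounded subset of the velocity space: given a ``frozen'' velocity $\bfw$ (and the associated enthalpy $e=e(\bfw)$ obtained from the second equation), solve the linear non-stationary Stokes-type problem
\begin{equation*}
(\bfu_t,\bfv)+a_u(\bfu,\bfv)+b_u(\bfw,\bfw,\bfv)=(\varrho(e)\bff,\bfv)
\end{equation*}
for all $\bfv\in{V}_{{\Gamma_2}}^{1,2}$, with $\bfu(0)=\bfu_0$, and set $\mathcal{F}(\bfw)=\bfu$. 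The auxiliary existence and uniqueness results for this linear Stokes system with mixed boundary conditions, together with the maximal-regularity estimate controlled by the constant $C_S$ in \eqref{est:max_reg_stokes_2} and Lemma~\ref{lemma:emb_D} (giving $D\hookrightarrow\mathbf{W}^{2,2}$), are precisely what is quoted from \cite{Benes2011c,Benes2014a}. In parallel, for the enthalpy I would invoke the stated well-posedness of the parabolic convection-diffusion problem with the nonlinear Newton boundary condition $\gamma(\beta(e),\varphi)$ and the dissipation source $d(\bfw,\bfw,\varphi)$; here the Lipschitz bound \eqref{cont_beta}, the monotonicity \eqref{monotony_beta}, and the two-sided bound \eqref{con:kappa} on $\kappa$ guarantee a unique weak solution $e\in L^2(I;W^{1,2})\cap\mathcal{C}(I;L^2)$ with $e_t\in L^2(I;W^{-1,2})$, depending continuously on $\bfw$ through the right-hand side.

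Next I would verify the three hypotheses of Schauder's theorem. The invariance of the ball is where condition \eqref{main_cond} enters: testing the Stokes equation with $\bfu$ (and with $\bfu_t$ to reach the $D$-norm) and using $|b_u(\bfw,\bfw,\bfu)|\le c\|\bfw\|_{{V}_{{\Gamma_2}}^{7/8,2}}\|\nabla\bfw\|\,\|\bfu\|$-type interpolation inequalities together with the $T^{1/8}$ time-factor from H\"older's inequality in time, one gets a closed estimate of the form $\|\bfu\|_{\mathcal X}\le 2C_S^2 T^{1/8}(\varrho_2\|\bff\|+\|\bfu_0\|)\cdot(\text{something}\le1)$, so that the smallness assumption \eqref{main_cond} forces $\mathcal{F}$ to map the ball $\{\|\bfw\|\le R\}$ into itself with $R$ chosen accordingly. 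Continuity of $\mathcal{F}$ follows by taking differences: if $\bfw_n\to\bfw$ in the relevant norm, then $e(\bfw_n)\to e(\bfw)$ (using Lipschitz continuity of $\beta$ and stability of the parabolic problem), hence $\varrho(e(\bfw_n))\bff\to\varrho(e(\bfw))\bff$, and the difference $\mathcal{F}(\bfw_n)-\mathcal{F}(\bfw)$ solves a linear Stokes problem with right-hand side tending to zero, so it tends to zero in $\mathcal X$. Compactness of $\mathcal{F}$ is supplied by Theorem~\ref{aubin_compact} (Aubin): since $\bfu\in L^2(I;D)$ with $\bfu_t\in L^2(I;{V}_{{\Gamma_2}}^{0,2})$, and $D\hookrightarrow\mathbf{W}^{2,2}\hookrightarrow\hookrightarrow{V}_{{\Gamma_2}}^{1,2}$ compactly by \eqref{compact_embedding}, the image lies in a compact subset of the space on which we work. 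Schauder then yields a fixed point $\bfu=\mathcal{F}(\bfu)$, and the pair $[\bfu,e(\bfu)]$ is the desired strong-weak solution; the regularity claims for $\bfu$ are read off from membership in $L^2(I;D)\cap\mathcal{C}(I;{V}_{{\Gamma_2}}^{1,2})$ combined with Lemma~\ref{lemma:emb_D}.

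The main obstacle I anticipate is closing the a priori estimate for the frozen-coefficient Stokes problem in a way that is genuinely uniform on the ball and compatible with the nonlinear coupling, rather than merely local in time. Because the ``do nothing'' condition \eqref{eq7} destroys the usual antisymmetry $b_u(\bfu,\bfu,\bfu)=0$, one cannot rely on the energy identity; instead one must absorb the convective term $b_u(\bfw,\bfw,\bfu)$ using the maximal-regularity norm $\|\bfu\|_{L^2(I;D)}$ and a carefully chosen interpolation inequality (of the type $\|\bfw\|_{L^4}\lesssim\|\bfw\|^{1/4}_{{V}_{{\Gamma_2}}^{0,2}}\|\bfw\|^{3/4}_{\mathbf{W}^{2,2}}$, or the sharper space $V_\kappa^{7/8,2}$ appearing throughout the macro list), so that the nonlinear term is controlled by $T^{1/8}$ times a quadratic expression in the ball radius. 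The bookkeeping that turns this into the clean threshold \eqref{main_cond} — tracking exactly how $C_S$, $\varrho_2$, and the time exponent $1/8$ combine — is the delicate part; everything else (the parabolic enthalpy solvability, the $\beta$-Lipschitz and $\kappa$-boundedness bounds, the Aubin compactness) is standard once that estimate is in place. A secondary technical point is the regularity of stationary Stokes solutions on the non-smooth domain $\Omega$ with the right-angle edges $\mathcal{M}_i$, which is needed to justify $D\hookrightarrow\mathbf{W}^{2,2}$; this is exactly the content of Lemma~\ref{lemma:emb_D} and is imported from \cite{Benes2014a}, so I would use it as a black box.
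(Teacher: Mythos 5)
Your overall strategy --- decouple the system, use the maximal-regularity theory for the mixed-boundary Stokes problem, absorb the convective term with a $T^{1/8}$ factor so that \eqref{main_cond} yields invariance of a ball, and conclude with Aubin compactness and Schauder --- is exactly the paper's strategy. The one place where your setup genuinely diverges, and where it has a gap, is the definition of the fixed-point map. You take the fixed point in the velocity alone and slave the enthalpy to it: you need the map $\bfw\mapsto e(\bfw)$ obtained by solving the \emph{fully quasilinear} enthalpy problem $e_t+\bfw\cdot\nabla e-\nabla\cdot(\kappa(e)\nabla e)=h+\mathbb{D}(\bfw):\mathbb{D}(\bfw)$ with the nonlinear Newton condition. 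But the auxiliary result available (Theorem \ref{linear_heat_problem}) only gives existence and uniqueness when the diffusion coefficient is \emph{frozen}, i.e.\ $\kappa(\eta)$ for a given $\eta$; for $\kappa$ evaluated at the unknown $e$ itself, uniqueness and continuous dependence of weak solutions do not follow from \eqref{con:kappa} and \eqref{monotony_beta} alone, so $e(\bfw)$ is not known to be a single-valued continuous map and Schauder cannot be applied to your $\mathcal{F}$ as written. The paper avoids this precisely by taking the fixed-point variable to be the \emph{pair} $[\tilde{\bfu},\tilde e]\in X\otimes L^{2}(I;L^{2})$: the coefficient is frozen as $\kappa(\tilde e)$ and the buoyancy as $\varrho(\tilde e)\bff$, so every auxiliary problem solved is linear up to the monotone boundary term, and the $e$-component of the invariant set is just a large ball secured by an a priori estimate independent of the inputs. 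To keep a velocity-only fixed point you would have to prove well-posedness of the quasilinear problem separately (say, via a Kirchhoff-type transformation) or pass to a set-valued fixed point theorem; neither is done in the paper.

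A second, smaller point: the ball-invariance step needs a working norm for the velocity that is simultaneously strong enough to give $\|b_u(\tilde{\bfu},\tilde{\bfu},\cdot)\|_{L^2({I};{V}_{{\Gamma_2}}^{0,2})}\le T^{1/8}\|\tilde{\bfu}\|_X^2$ with constant exactly one (this is what makes the threshold \eqref{main_cond} come out clean through Corollary \ref{corolary_stokes}) and weak enough to be compactly attained from the maximal-regularity class. The paper manufactures this norm explicitly, $X=L^4({I};\mathbf{L}^{24})\cap L^{8}({I};\mathbf{W}^{1,24/11})$, via the interpolation inequalities $\|\bfphi\|_{\mathbf{W}^{3/2,2}}\le c\|\bfphi\|_{\mathbf{W}^{1,2}}^{1/2}\|\bfphi\|_{\mathbf{W}^{2,2}}^{1/2}$ and $\|\bfphi\|_{\mathbf{W}^{5/4,2}}\le c\|\bfphi\|_{\mathbf{W}^{2,2}}^{1/4}\|\bfphi\|_{\mathbf{W}^{1,2}}^{3/4}$ combined with \eqref{compact_embedding} and Theorem \ref{aubin_compact}. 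You correctly flag this bookkeeping as the delicate part but leave it open; it is not optional, since the constant $C_S$ appearing in \eqref{main_cond} is defined through precisely this embedding in \eqref{est:max_reg_stokes_2}.
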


\section{Proof of the main result}
\label{sec:proof}

To prove Theorem \ref{theorem:existence_result}
we apply the following
\begin{thm}[Schauder fixed point theorem]\label{th:Schauder}
Let ${S}$ be a closed convex set in
a Banach space ${B}$ and let $\mathcal{T}$ be a continuous
mapping of ${S}$ into itself such that the image
$\mathcal{T}(S)$ is precompact.
Then $\mathcal{T}$ has a fixed point.
\end{thm}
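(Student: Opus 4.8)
The plan is to reduce this infinite-dimensional fixed point statement to the finite-dimensional Brouwer fixed point theorem, which I would take as a known prerequisite. The device for the reduction is the \emph{Schauder projection}, built from the precompactness of $\mathcal{T}(S)$ together with the convexity of $S$. The overall structure is threefold: first, approximate the compact map $\mathcal{T}$ by finite-rank continuous maps $P_\varepsilon \circ \mathcal{T}$; second, produce by Brouwer an approximate fixed point $x_\varepsilon$ of each such map, satisfying $\|x_\varepsilon - \mathcal{T}(x_\varepsilon)\| \le \varepsilon$; third, let $\varepsilon \to 0$ and extract, via the precompactness of $\mathcal{T}(S)$, a convergent subsequence whose limit is a genuine fixed point of $\mathcal{T}$.

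For the approximation step, fix $\varepsilon > 0$. Since $K := \overline{\mathcal{T}(S)}$ is compact, it admits a finite $\varepsilon$-net $\{y_1,\dots,y_n\}\subset \mathcal{T}(S)$. Set $m_i(y) := \max\{0,\,\varepsilon - \|y - y_i\|\}$ for $y \in K$; these are continuous, nonnegative, and at each $y$ at least one $m_i(y)$ is strictly positive (because $\{y_i\}$ is an $\varepsilon$-net), so $\sum_i m_i > 0$ on $K$. Define the Schauder projection
\[
P_\varepsilon(y) := \frac{\sum_{i=1}^n m_i(y)\, y_i}{\sum_{i=1}^n m_i(y)}.
\]
Then $P_\varepsilon$ is continuous, and its image lies in $C_\varepsilon := \mathrm{conv}\{y_1,\dots,y_n\}$, a compact convex subset of a finite-dimensional subspace of $B$. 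The key elementary estimate is $\|P_\varepsilon(y) - y\| \le \varepsilon$ for every $y \in K$: in the convex combination defining $P_\varepsilon(y)$ only those $y_i$ with $\|y - y_i\| < \varepsilon$ carry nonzero weight, and each such term differs from $y$ by less than $\varepsilon$.

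Next I would apply Brouwer and then pass to the limit. Because $S$ is convex and $\mathcal{T}(S)\subseteq S$, each $y_i \in S$ forces $C_\varepsilon \subseteq S$, so the composite $P_\varepsilon \circ \mathcal{T}$ maps $C_\varepsilon$ continuously into $C_\varepsilon$; as $C_\varepsilon$ is homeomorphic to a compact convex subset of some $\mathbb{R}^{d}$, Brouwer's theorem yields $x_\varepsilon \in C_\varepsilon$ with $x_\varepsilon = P_\varepsilon(\mathcal{T}(x_\varepsilon))$, whence the projection estimate at $y = \mathcal{T}(x_\varepsilon)$ gives $\|x_\varepsilon - \mathcal{T}(x_\varepsilon)\| \le \varepsilon$. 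Taking $\varepsilon = 1/k$, the points $\mathcal{T}(x_{1/k})$ lie in the precompact set $\mathcal{T}(S)$, so along a subsequence $\mathcal{T}(x_{1/k_j}) \to z$ with $z \in S$ (using that $S$ is closed). Since $\|x_{1/k} - \mathcal{T}(x_{1/k})\| \to 0$, the corresponding $x_{1/k_j}$ converge to the same limit $z$, and continuity of $\mathcal{T}$ gives $\mathcal{T}(x_{1/k_j}) \to \mathcal{T}(z)$; hence $z = \mathcal{T}(z)$, the desired fixed point.

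The main obstacle is the construction of the Schauder projection and verification of its two nonobvious properties — continuity of $P_\varepsilon$ together with the uniform bound $\|P_\varepsilon(y) - y\| \le \varepsilon$ — since these are precisely what permit the infinite-dimensional compact map to be replaced by a finite-rank map amenable to Brouwer; the remaining compactness-and-continuity limit is routine. I emphasize that this argument \emph{assumes} Brouwer's theorem: if one declined to assume it, that result would be the genuinely hard analytic input and would require a separate proof (for instance via Sperner's lemma or a smooth no-retraction argument).
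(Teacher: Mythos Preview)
Your proof is correct and is the standard argument via Schauder projections reducing to Brouwer's theorem. Note, however, that the paper does not actually prove this theorem: it simply cites \cite[p.~279, Theorem~11.1 and p.~280, Corollary~11.2]{GiTru}, where the proof given is essentially the one you have written out here. So your proposal supplies, in full, precisely the argument the paper defers to a reference.
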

For the proof of Theorem \ref{th:Schauder} see
\cite[p. 279, Theorem 11.1 and p. 280, Corollary 11.2]{GiTru}.

Before we proceed to  prove the main result of this paper, let us
establish the following well-possedness results for the auxiliary problems,
in particular, the existence of the unique regular
solution to the non-stationary Stokes problem with the mixed boundary conditions
and the existence and uniqueness of
the weak solution to the parabolic convection-diffusion equation
with the nonlinear boundary condition.

\begin{thm}[Stokes problem]\label{stokes_problem}
Let $\bff\in L^2({I};{V}_{{\Gamma_2}}^{0,2})$
and $\bfu_0 \in {V}_{{\Gamma_2}}^{1,2}$.
Then there exists the
unique function $\bfu\in L^2({I};{D}) \cap
{\mathcal{C}({I};{V}_{{\Gamma_2}}^{1,2})}$, $\bfu_t\in
L^2({I};{V}_{{\Gamma_2}}^{0,2})$, such that
\begin{equation} \label{lin_var_form_1a}
(\bfu_t,\bfv) + a_u(\bfu,\bfv) = (\bff,\bfv)
\end{equation}
holds for every $\bfv\in {V}_{{\Gamma_2}}^{1,2}$ and for almost every
$t\in{I}$ and
$$
\bfu(\bfx,0) = \bfu_0(\bfx) \qquad\textmd{ in }\Omega.
$$
Moreover,
\begin{multline}\label{eq11a}
\|\bfu_t\|_{L^2({I};{V}_{{\Gamma_2}}^{0,2})}
+
\|\bfu\|_{L^2({I};{D})}
+
\|\bfu\|_{\mathcal{C}({I};{V}_{{\Gamma_2}}^{1,2})}
\\
\leq
c(\Omega)
\left(
\|\bff\|_{L^2({I};{V}_{{\Gamma_2}}^{0,2})}
+
\|\bfu_0\|_{{V}_{{\Gamma_2}}^{1,2}}
\right).
\end{multline}
\end{thm}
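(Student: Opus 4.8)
The plan is to read \eqref{lin_var_form_1a} as the abstract linear parabolic Cauchy problem $\bfu_t+A\bfu=\bff$ governed by the stationary Stokes operator, and to solve it by the Galerkin method in a basis of eigenfunctions of that operator. Introduce $A\colon {D}\to {V}_{{\Gamma_2}}^{0,2}$ by $A\bfu=\bff$, where $\bfu$ and $\bff$ are related through \eqref{D_u}; equivalently $(A\bfu,\bfv)=a_u(\bfu,\bfv)$ for all $\bfv\in {V}_{{\Gamma_2}}^{1,2}$. Since every element of ${V}_{{\Gamma_2}}^{1,2}$ vanishes on $\Gamma_1\neq\emptyset$, the Poincar\'e (Friedrichs) inequality gives coercivity, $a_u(\bfu,\bfu)=\|\nabla\bfu\|_{L^2}^2\ge c(\om)\|\bfu\|_{{V}_{{\Gamma_2}}^{1,2}}^2$, so by Lax--Milgram $A$ is boundedly invertible with $A^{-1}\colon {V}_{{\Gamma_2}}^{0,2}\to {V}_{{\Gamma_2}}^{1,2}$ bounded; composing with the compact embedding ${V}_{{\Gamma_2}}^{1,2}\hookrightarrow\hookrightarrow {V}_{{\Gamma_2}}^{0,2}$ (a component-wise consequence of \eqref{compact_embedding}) shows $A^{-1}$ is compact, self-adjoint and positive on ${V}_{{\Gamma_2}}^{0,2}$. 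Hence there is an $L^2$-orthonormal basis $\{\bfw_k\}_{k\ge1}\subset {D}$ of eigenfunctions, $A\bfw_k=\lambda_k\bfw_k$, $0<\lambda_1\le\lambda_2\le\cdots\to\infty$, which is also orthogonal for $a_u(\cdot,\cdot)$ and complete in ${V}_{{\Gamma_2}}^{1,2}$; by Lemma~\ref{lemma:emb_D} moreover ${D}\hookrightarrow\mathbf{W}^{2,2}$.

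Next, let $\bfu^N(t)=\sum_{k=1}^N c_k^N(t)\bfw_k$ solve the linear ODE system $(\bfu^N_t,\bfw_k)+a_u(\bfu^N,\bfw_k)=(\bff,\bfw_k)$, $k=1,\dots,N$, with $\bfu^N(0)=P_N\bfu_0$ the $L^2$-projection of $\bfu_0$ onto $\mathrm{span}\{\bfw_1,\dots,\bfw_N\}$; this projection coincides with the $a_u$-orthogonal one, hence $P_N\bfu_0\to\bfu_0$ in ${V}_{{\Gamma_2}}^{1,2}$ and $\|P_N\bfu_0\|_{{V}_{{\Gamma_2}}^{1,2}}\le c\|\bfu_0\|_{{V}_{{\Gamma_2}}^{1,2}}$. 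The ODE system has an $L^2$-in-time right-hand side and is uniquely solvable on all of ${I}$. Because the $\bfw_k$ diagonalize $A$, the test function $A\bfu^N=\sum_k\lambda_k c_k^N\bfw_k$ is admissible, and it produces
\[
\tfrac{1}{2}\frac{d}{dt}\,a_u(\bfu^N,\bfu^N)+\|A\bfu^N\|_{L^2}^2
=(\bff,A\bfu^N)\le\tfrac{1}{2}\|\bff\|_{L^2}^2+\tfrac{1}{2}\|A\bfu^N\|_{L^2}^2 .
\]
Integrating over $(0,t)$ bounds $\bfu^N$ in $\mathcal{C}({I};{V}_{{\Gamma_2}}^{1,2})$ and $A\bfu^N$ in $L^2({I};{V}_{{\Gamma_2}}^{0,2})$; by the definition of the ${D}$-norm this is a bound on $\bfu^N$ in $L^2({I};{D})$, hence in $L^2({I};\mathbf{W}^{2,2})$ via Lemma~\ref{lemma:emb_D}, and reading $\bfu^N_t=-A\bfu^N+P_N\bff$ off the equation bounds $\bfu^N_t$ in $L^2({I};{V}_{{\Gamma_2}}^{0,2})$. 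All these bounds are of the form $c(\om)\big(\|\bff\|_{L^2({I};{V}_{{\Gamma_2}}^{0,2})}+\|\bfu_0\|_{{V}_{{\Gamma_2}}^{1,2}}\big)$.

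It remains to pass to the limit. Along a subsequence $\bfu^N\rightharpoonup\bfu$ in $L^2({I};{D})$ and $\bfu^N_t\rightharpoonup\bfu_t$ in $L^2({I};{V}_{{\Gamma_2}}^{0,2})$; since ${D}\hookrightarrow\hookrightarrow {V}_{{\Gamma_2}}^{1,2}\hookrightarrow {V}_{{\Gamma_2}}^{0,2}$ (compactness from Lemma~\ref{lemma:emb_D} and \eqref{compact_embedding}), Theorem~\ref{aubin_compact} yields $\bfu^N\to\bfu$ strongly in $L^2({I};{V}_{{\Gamma_2}}^{1,2})$. Linearity then lets us pass to the limit in the Galerkin identity --- first against each fixed $\bfw_k$, then against all $\bfv\in {V}_{{\Gamma_2}}^{1,2}$ by density --- and in the initial datum. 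From $\bfu\in L^2({I};{D})$ and $\bfu_t\in L^2({I};{V}_{{\Gamma_2}}^{0,2})$, the classical Lions--Magenes interpolation--trace argument (as used for this operator framework in \cite{Benes2011c}), together with $[{D},{V}_{{\Gamma_2}}^{0,2}]_{1/2}={V}_{{\Gamma_2}}^{1,2}$ (valid because $a_u(\bfu,\bfu)=\|A^{1/2}\bfu\|_{L^2}^2$ is equivalent to $\|\bfu\|_{{V}_{{\Gamma_2}}^{1,2}}^2$), gives $\bfu\in\mathcal{C}({I};{V}_{{\Gamma_2}}^{1,2})$. Estimate \eqref{eq11a} follows from the uniform Galerkin bounds by weak lower semicontinuity of the norms. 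Uniqueness is immediate: the difference $\bfw$ of two solutions satisfies $(\bfw_t,\bfv)+a_u(\bfw,\bfv)=0$ with $\bfw(0)=\bfzero$, so taking $\bfv=\bfw(t)$ gives $\tfrac{d}{dt}\|\bfw\|_{L^2}^2+2a_u(\bfw,\bfw)=0$, hence $\bfw\equiv\bfzero$.

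The only non-routine ingredients are the structural properties of $A$: coercivity of $a_u$ on ${V}_{{\Gamma_2}}^{1,2}$, compactness of its resolvent, and --- the genuinely delicate point --- the elliptic regularity ${D}\hookrightarrow\mathbf{W}^{2,2}$, which is precisely where the geometric hypotheses on $\om$ (right angles along $\mathcal{M}$, no conical vertices) enter; this is supplied by Lemma~\ref{lemma:emb_D}. Once these are granted, the argument is the textbook Galerkin scheme for a self-adjoint abstract parabolic equation and I expect no further obstacle. An alternative, equivalent route is to invoke maximal $L^2$-regularity for the analytic semigroup generated by $-A$ directly, but the Galerkin derivation above has the advantage of making the explicit dependence of the constant in \eqref{eq11a} on $\om$ transparent.
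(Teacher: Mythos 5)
Your proposal is correct and follows essentially the same route as the paper, which does not spell out the argument but defers to \cite{BeKuc2014} and explicitly describes the method there as ``Galerkin approximation with spectral basis and the uniform boundedness of approximate solutions in suitable spaces'' --- precisely your eigenfunction-basis Galerkin scheme with the $A\bfu^N$ test function and weak-limit passage. You also correctly isolate the one genuinely nontrivial ingredient, the regularity embedding ${D}\hookrightarrow\mathbf{W}^{2,2}$ of Lemma~\ref{lemma:emb_D}, which is where the geometric assumptions on $\Omega$ enter.
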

\begin{proof}
The proof can be handled in exactly the same way
as the proof of \cite[Theorem 3.4]{BeKuc2014}
concerning, in particular, a two-dimensional problem.
The proof is based on the
Galerkin approximation with spectral basis and the uniform
boundedness of approximate solutions in suitable spaces.
In fact, the proof is independent
of dimension, therefore we skip it here.
$\square$
\end{proof}

By the standard parabolic-equation theory \cite[Chapter 8]{Roubicek2005} we have the following
\begin{thm}[Convection-diffusion problem with the nonlinear boundary condition]\label{linear_heat_problem}
Let $h\in L^2({I};{W}^{-1,2})$, $\eta \in L^{\infty}(I,L^{\infty}(\Omega))$,
$e_0 \in L^{2}$,
$\bfv \in L^2({I};{D}) \cap {\mathcal{C}({I};{V}_{{\Gamma_2}}^{1,2})}$.
Then there exists the unique
function ${e} \in  L^2({I};{W}^{1,2}) \cap {\mathcal{C}({I};L^2)}$,
${e}_t\in L^2({I};{W}^{-1,2})$, such that
\begin{equation*}% \label{lin_var_form_2a}
\langle {e}_t,\varphi \rangle
+
a_{{e}}(\kappa(\eta),{e},\varphi)
+
b_e(\bfv,{e},\varphi)
+
\gamma(\beta(e),\varphi)
=
\langle h,\varphi \rangle
\end{equation*}
holds for every $\varphi\in {W}^{1,2}$ and for almost every
$t\in{I}$ and
$$
e(\bfx,0) = e_0(\bfx) \qquad\textmd{ in }\Omega.
$$
\end{thm}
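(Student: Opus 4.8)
The statement to be proved is Theorem \ref{linear_heat_problem}, the well-posedness of the linear convection--diffusion problem with the nonlinear Newton-type boundary term $\gamma(\beta(e),\varphi)$, for a fixed frozen coefficient $\eta$ and a fixed velocity field $\bfv$. The plan is to treat this as a standard abstract parabolic initial-value problem of the form $\langle e_t,\varphi\rangle + \mathcal{A}(t)(e,\varphi) + \mathcal{B}(\beta(e),\varphi)= \langle h,\varphi\rangle$ on the Gelfand triple ${W}^{1,2}\hookrightarrow L^2 \hookrightarrow {W}^{-1,2}$, and to invoke the existence theory for parabolic equations with monotone (pseudomonotone) lower-order perturbations, e.g. \cite[Chapter 8]{Roubicek2005}. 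First I would check the structural hypotheses: the principal bilinear form $a_e(\kappa(\eta),e,\varphi)$ is, for each $t$, bounded on ${W}^{1,2}\times {W}^{1,2}$ (since $\kappa(\eta)\in L^\infty$ by \eqref{con:kappa}) and, together with a multiple of the $L^2$-term, coercive on ${W}^{1,2}$ because $\kappa(\eta)\ge \kappa_1>0$; measurability of $t\mapsto a_e(\kappa(\eta(t)),\cdot,\cdot)$ follows from $\eta\in L^\infty(I;L^\infty)$. The convective form $b_e(\bfv,e,\varphi)=\iom v_i\,\partial_i e\,\varphi$ is a lower-order term which, using $\bfv\in \mathcal{C}(I;{V}_{{\Gamma_2}}^{1,2})\hookrightarrow \mathcal{C}(I;\mathbf{L}^6)$ and Lemma \ref{lemma:emb_D}, is continuous from ${W}^{1,2}$ to ${W}^{-1,2}$ with a bound controlled by $\|\bfv\|_{\mathcal{C}(I;{V}_{{\Gamma_2}}^{1,2})}$; it does not affect coercivity after absorbing it with Young's inequality into the principal part and the $L^2$-norm.

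Next I would handle the nonlinear boundary term. By \eqref{cont_beta} $\beta$ is Lipschitz with $\beta(0)=0$, and by \eqref{monotony_beta} it is monotone; the trace operator $\mathcal{R}:{W}^{1,2}\to L^2(\Gamma_1)$ is bounded and compact (it even maps into $L^q(\Gamma_1)$ for $q<4$). Hence the map $\varphi\mapsto \gamma(\beta(\varphi),\cdot)$ is a bounded, continuous, monotone operator from ${W}^{1,2}$ to ${W}^{-1,2}$: monotonicity follows from $\langle \gamma(\beta(\varphi_1),\cdot)-\gamma(\beta(\varphi_2),\cdot),\varphi_1-\varphi_2\rangle=\int_{\Gamma_1}(\beta(\mathcal{R}\varphi_1)-\beta(\mathcal{R}\varphi_2))(\mathcal{R}\varphi_1-\mathcal{R}\varphi_2)\,{\rm d}S\ge 0$ by \eqref{monotony_beta}. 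The sum of the linear principal operator (plus the shift by a large multiple of the identity) and this monotone boundary operator is therefore pseudomonotone, coercive and bounded, so the abstract existence theorem for parabolic equations with pseudomonotone operators applies and yields a solution $e$ with the asserted regularity $e\in L^2(I;{W}^{1,2})\cap \mathcal{C}(I;L^2)$, $e_t\in L^2(I;{W}^{-1,2})$, attaining the initial datum $e_0\in L^2$; the continuity in time into $L^2$ is the standard Lions--Magenes consequence of $e\in L^2(I;{W}^{1,2})$ and $e_t\in L^2(I;{W}^{-1,2})$.

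For uniqueness I would subtract two solutions $e_1,e_2$, test the difference equation with $\varphi=e_1-e_2$, and use: (i) coercivity of the principal form, (ii) the monotonicity inequality for the boundary term, which has the favourable sign, and (iii) an estimate of $b_e(\bfv,e_1-e_2,e_1-e_2)$ of the form $|b_e(\bfv,w,w)|\le C\|\bfv\|_{\mathbf{L}^6}\|w\|_{L^3}\|\nabla w\|_{L^2}\le \tfrac{\kappa_1}{2}\|\nabla w\|_{L^2}^2 + C(\|\bfv\|)\|w\|_{L^2}^2$ by Gagliardo--Nirenberg interpolation between $L^2$ and ${W}^{1,2}$ in three dimensions. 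This leads to $\tfrac{d}{dt}\|w(t)\|_{L^2}^2 \le C(t)\|w(t)\|_{L^2}^2$ with $C\in L^1(I)$, and Grönwall's lemma with $w(0)=0$ gives $w\equiv 0$. The main obstacle is essentially bookkeeping rather than conceptual: one must verify carefully that the convective term is genuinely lower-order in the three-dimensional setting — i.e. that $b_e(\bfv,\cdot,\cdot)$ can always be dominated by the principal part up to an $L^2$-term with an $L^1$-in-time constant — which is exactly where the regularity $\bfv\in \mathcal{C}(I;{V}_{{\Gamma_2}}^{1,2})$ (hence $\bfv\in L^\infty(I;\mathbf{L}^6)$) is used, and to confirm that the nonlinear boundary operator does not destroy the pseudomonotonicity/coercivity structure required by the cited parabolic theory. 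Once these points are settled, the result is a direct application of \cite[Chapter 8]{Roubicek2005}.
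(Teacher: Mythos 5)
Your proposal is correct and follows essentially the same route as the paper: the paper gives no proof at all for this theorem, stating only that it follows ``by the standard parabolic-equation theory \cite[Chapter 8]{Roubicek2005}'', and your argument is precisely the verification of the hypotheses of that theory (Gelfand triple, bounded semi-coercive principal part via \eqref{con:kappa}, lower-order convective term absorbed using $\bfv\in\mathcal{C}({I};{V}_{{\Gamma_2}}^{1,2})\hookrightarrow\mathcal{C}({I};\mathbf{L}^6)$, and the bounded monotone Lipschitz boundary operator from \eqref{cont_beta}--\eqref{monotony_beta}), plus the standard Gr\"onwall uniqueness argument.
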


%---------------------------------------------------------------

Now we are prepared to prove the main result of this paper.

\begin{proof}[Proof of Theorem \ref{theorem:existence_result}]
First, let us introduce the following reflexive Banach spaces
\begin{displaymath}
X:=
\bigl\{\bfphi\;|\;
\bfphi \in L^4({I};\mathbf{L}^{24})
\cap
L^{8}({I};\mathbf{W}^{1,24/11})
\bigr\}
\end{displaymath}
and
\begin{displaymath}
{Y} :=
\left\{\bfpsi\;|\;
\bfpsi\in L^2({I};{D})  \cap \mathcal{C}({I};{V}_{{\Gamma_2}}^{1,2}),\;
\bfpsi_t\in L^2({I};{V}_{{\Gamma_2}}^{0,2}) \right\},
\end{displaymath}
respectively, equipped with the norms
\begin{displaymath}
\|\bfphi\|_{X} :=
\dc\bfphi\dc_{L^4({I};\mathbf{L}^{24})} +
\dc\bfphi\dc_{L^{8}({I};\mathbf{W}^{1,24/11})}
\end{displaymath}
and
\begin{displaymath}
\|\bfpsi\|_{{Y}}
:=
\|\bfpsi\|_{L^2({I};{D})}
+
\|\bfpsi\|_{\mathcal{C}({I};{V}_{{\Gamma_2}}^{1,2})}
+
\dc\bfpsi_t\dc_{L^2({I};{V}_{{\Gamma_2}}^{0,2})}.
\end{displaymath}
Let us present some properties of $X$ and $Y$.
Let $\bfphi \in {Y}$. Raising and integrating the interpolation
inequality
\begin{equation*}
\| \bfphi(t) \|_{\mathbf{W}^{3/2,2}}
\leq
c(\Omega)
\| \bfphi(t) \|^{1/2}_{\mathbf{W}^{1,2}} \|\bfphi(t)\|^{1/2}_{\mathbf{W}^{2,2}}
\end{equation*}
over $I$ we get, recall ${D}\hookrightarrow \mathbf{W}^{2,2}$,
\begin{eqnarray*}%\label{est_121}
 \| \bfphi \|_{L^{4}({I};\mathbf{W}^{3/2,2})}
&\leq&
c(\Omega)
\| \bfphi \|^{1/2}_{{L}^{2}({I};\mathbf{W}^{2,2})}
\| \bfphi \|^{1/2}_{\mathcal{C}({I};\mathbf{W}^{1,2})}
\nonumber
\\
&\leq& c(\Omega)  \| \bfphi \|_{{Y}}.
\end{eqnarray*}
Hence, we have
\begin{equation}\label{emb_160}
{Y} \hookrightarrow L^{4}({I};\mathbf{W}^{3/2,2}).
\end{equation}
In view of the imbedding relations \eqref{embedding_theorems}
and \eqref{compact_embedding} for Sobolev spaces we have
\begin{equation*}
\mathbf{W}^{3/2,2}
\hookrightarrow\hookrightarrow
\mathbf{W}^{11/8,2}
\hookrightarrow
\mathbf{L}^{24}
\end{equation*}
and applying Theorem \ref{aubin_compact} we get
\begin{equation}\label{comp_emb_14}
{Y}  \hookrightarrow \hookrightarrow L^4({I};\mathbf{W}^{11/8,2})
\hookrightarrow L^4({I};\mathbf{L}^{24}).
\end{equation}
Further, raising and integrating the interpolation inequality (cf.
\cite[Theorem 5.2]{AdamsFournier1992})
\begin{equation*}
\| \bfphi(t) \|_{\mathbf{W}^{5/4,2}}
\leq
c(\Omega)
\|\bfphi(t)\|^{1/4}_{\mathbf{W}^{2,2}}
\| \bfphi(t) \|^{3/4}_{\mathbf{W}^{1,2}}
\end{equation*}
from $0$ to $T$ we get
\begin{eqnarray*}%\label{est_101}
\| \bfphi \|_{L^{8}({I};\mathbf{W}^{5/4,2})}
&\leq&
c(\Omega)
\| \bfphi \|^{1/4}_{{L}^{2}({I};\mathbf{W}^{2,2})}
\| \bfphi \|^{3/4}_{\mathcal{C}({I};\mathbf{W}^{1,2})}
\\
&\leq&
c(\Omega) \, \| \bfphi \|_{Y}.
\end{eqnarray*}
Hence,
\begin{equation}\label{emb_150}
{Y} \hookrightarrow L^{8}({I};\mathbf{W}^{5/4,2}).
\end{equation}
Note that
\begin{equation}\label{emb_151}
\mathbf{W}^{5/4,2}
\hookrightarrow\hookrightarrow
\mathbf{W}^{9/8,2}
\hookrightarrow
\mathbf{W}^{1,24/11}.
\end{equation}
Now \eqref{emb_150} and \eqref{emb_151} and Theorem \ref{aubin_compact}
yield the compact embedding
\begin{equation}\label{comp_emb_15}
{Y}
\hookrightarrow \hookrightarrow
L^8({I}; \mathbf{W}^{9/8,2} )
 \hookrightarrow
L^8({I};\mathbf{W}^{1,24/11})
.
\end{equation}
Finally, \eqref{comp_emb_14} and \eqref{comp_emb_15} imply the compact
embedding
\begin{equation}\label{comp_emb_YX}
{Y}\hookrightarrow\hookrightarrow X.
\end{equation}
Moreover,
\begin{equation}\label{est_emb_YX}
\|\bfphi\|_{X}
\leq
c(\Omega)
\|\bfphi\|_{Y}
\end{equation}
for every $\bfphi \in Y$.
Now, \eqref{est_emb_YX} and Theorem \ref{stokes_problem}
immediately  yield the  following
\begin{cor}\label{corolary_stokes}
There exists a constant $C_S$ depending only on $\Omega$,
such that the solution of \eqref{lin_var_form_1a} satisfies the estimate
\begin{equation}\label{est:max_reg_stokes_2}
\|\bfu\|_{X}
\leq
C_S
\left(
\|\bff\|_{L^2({I};{V}_{{\Gamma_2}}^{0,2})}
+
\|\bfu_0\|_{{V}_{{\Gamma_2}}^{1,2}}
\right).
\end{equation}
\end{cor}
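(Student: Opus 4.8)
The plan is to obtain \eqref{est:max_reg_stokes_2} as a one-line composition of two facts already established above, so at this stage there is essentially nothing new to prove. First, Theorem~\ref{stokes_problem} asserts that for the given data $\bff\in L^2({I};{V}_{{\Gamma_2}}^{0,2})$ and $\bfu_0\in{V}_{{\Gamma_2}}^{1,2}$ there is a unique solution $\bfu$ of \eqref{lin_var_form_1a} belonging to $L^2({I};{D})\cap\mathcal{C}({I};{V}_{{\Gamma_2}}^{1,2})$ with $\bfu_t\in L^2({I};{V}_{{\Gamma_2}}^{0,2})$, i.e.\ $\bfu\in Y$, and the estimate \eqref{eq11a} bounds the sum of exactly the three norms entering the definition of $\|\cdot\|_{Y}$ by $c(\om)\bigl(\|\bff\|_{L^2({I};{V}_{{\Gamma_2}}^{0,2})}+\|\bfu_0\|_{{V}_{{\Gamma_2}}^{1,2}}\bigr)$. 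Hence $\|\bfu\|_{Y}\le c(\om)\bigl(\|\bff\|_{L^2({I};{V}_{{\Gamma_2}}^{0,2})}+\|\bfu_0\|_{{V}_{{\Gamma_2}}^{1,2}}\bigr)$.

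Second, the embedding $Y\hookrightarrow X$ recorded in \eqref{comp_emb_YX}, together with its quantitative form \eqref{est_emb_YX}, gives $\|\bfu\|_{X}\le c(\om)\|\bfu\|_{Y}$. Chaining these two inequalities and setting $C_S$ equal to the product of the two $\om$-dependent constants produces \eqref{est:max_reg_stokes_2}, with $C_S=C_S(\om)$ — which is exactly what is needed so that the smallness hypothesis \eqref{main_cond} involves a constant depending only on the geometry of the pipe system.

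The only point that deserves a moment's attention — and it is a verification, not an obstacle — is that the left-hand side of \eqref{eq11a} coincides term by term with $\|\bfu\|_{Y}$, so that the Stokes a priori bound can be reread verbatim as a bound on $\|\bfu\|_{Y}$; this is immediate from the definition of the norm on $Y$. All the substantive work has already been carried out before the statement: the elliptic regularity $D\hookrightarrow\mathbf{W}^{2,2}$ of Lemma~\ref{lemma:emb_D}, the interpolation inequalities leading to \eqref{emb_160} and \eqref{emb_150}, and the Aubin compactness Theorem~\ref{aubin_compact} used in \eqref{comp_emb_14} and \eqref{comp_emb_15}. Consequently the corollary follows at once, and no fixed-point or energy argument is required here.
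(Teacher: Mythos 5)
Your argument is exactly the paper's: the authors state that \eqref{est_emb_YX} and Theorem~\ref{stokes_problem} ``immediately yield'' the corollary, which is precisely the chaining of $\|\bfu\|_{X}\le c(\Omega)\|\bfu\|_{Y}$ with the bound \eqref{eq11a}, whose left-hand side is term by term the $Y$-norm. Your proposal is correct and adds only the (harmless) explicit remark that this identification of norms is the one thing to check.
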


For an arbitrary fixed couple
$$
[\widetilde{\bfu},\tilde{e}]
\in
X
\otimes
{L^{2}(I;L^{2})}
$$
we now consider the nonlinear problem
\begin{eqnarray}
({\bfu}_t,\bfv) + a_u({\bfu},\bfv)  &=& ( \varrho(\tilde{e})\bff,\bfv )
-b_u(\widetilde{\bfu},\widetilde{\bfu},\bfv),
\label{linear_problem_101}
\\
\langle{e}_t,\varphi\rangle
+
a_{e}(\kappa(\tilde{e}),e,\varphi)
+
b_e(\bfu,{e},\varphi)
\nonumber
\\
+
\gamma(\beta(e),\varphi)
-
d(\bfu,\bfu,\varphi)
&=&
\langle g,\varphi\rangle
\label{linear_problem_102}
\end{eqnarray}
for every $[\bfv,\varphi]\in {V}_{{\Gamma_2}}^{1,2} \otimes
{W}^{1,2}$ and for almost every $t\in{I}$ and
\begin{eqnarray}
{\bfu}(\bfx,0) &=& \bfu_{0}(\bfx)
\;\textmd{ in }\Omega,
\label{linear_problem_103}
\\
e(\bfx,0)&=&  e_0(\bfx) \;\textmd{ in }\Omega.
\label{linear_problem_104}
\end{eqnarray}
Let us estimate the right-hand side in \eqref{linear_problem_101}.
Applying Sobolev embeddings \eqref{embedding_theorems}
together with the Young's inequality we deduce
\begin{eqnarray}\label{eq40}
\|b_u(\tilde{\bfu},\tilde{\bfu},\cdot)\|_{L^2( {I};{V}_{{\Gamma_2}}^{0,2})}
&\leq&
\left(\int_0^T \|\tilde{\bfu}\|^2_{\mathbf{L}^{24}(\om)}
\|\tilde{\bfu}\|^2_{\mathbf{W}^{1,24/11}(\om)}{\rm d}t\right)^{1/2}
\nonumber
\\
&\leq& T^{1/8}
\|\tilde{\bfu}\|_{L^4(I;\mathbf{L}^{24}(\om))}
\|\tilde{\bfu}\|_{L^8(I;\mathbf{W}^{1,24/11}(\om))}
\nonumber
\\
&\leq&  T^{1/8}\|\tilde{\bfu}\|^2_{X}.
\end{eqnarray}
Now,
by virtue of \eqref{con:rho} and \eqref{varrho},
we can write
\begin{eqnarray}\label{eq41}
\|( \varrho(\tilde{e})\bff,\cdot )
-
b_u(\widetilde{\bfu},\widetilde{\bfu},\cdot)\|_{L^2({I};{V}_{{\Gamma_2}}^{0,2})}
&\leq&
\|( \varrho(\tilde{e})\bff,\cdot )\|_{L^2({I};{V}_{{\Gamma_2}}^{0,2})}
\nonumber
\\
&&
+
\|b_u(\widetilde{\bfu},\widetilde{\bfu},\cdot)\|_{L^2({I};{V}_{{\Gamma_2}}^{0,2})}
\nonumber
\\
&\leq&
\varrho_2\|\bff\|_{L^2({I};{V}_{{\Gamma_2}}^{0,2})}
\nonumber
\\
&&
+
T^{1/8}
\|\widetilde{\bfu}\|_{L^4(I;\mathbf{L}^{24})}
\|\widetilde{\bfu}\|_{L^8(I;\mathbf{W}^{1,24/11})}
\nonumber
\\
&\leq&
\varrho_2\|\bff\|_{L^2({I};{V}_{{\Gamma_2}}^{0,2})}
+
T^{1/8}\|\widetilde{\bfu}\|^2_{X}.
\end{eqnarray}
Hence, for $[\widetilde{\bfu},\tilde{e}]
\in X \otimes {L^{2}(I;L^{2})}$
we have
$( \varrho(\tilde{e})\bff,\cdot )
-
b_u(\widetilde{\bfu},\widetilde{\bfu},\cdot)
\in
L^2({I};{V}_{{\Gamma_2}}^{0,2})
$
and
Theorem \ref{stokes_problem} yields the existence of the unique solution $\bfu$
to problem \eqref{linear_problem_101} and \eqref{linear_problem_103},
such that
\begin{align*}
& \bfu_t \in L^2( {I};{V}_{{\Gamma_2}}^{0,2}),
\quad
\bfu \in L^2({I};{D}) \cap \mathcal{C}({I}; {V}_{{\Gamma_2}}^{1,2}).
\end{align*}
Now with $\bfu$ in hand, we define $e$ to be the solution
of problem \eqref{linear_problem_102} and \eqref{linear_problem_104},
i.e. $e$ satisfying
\begin{equation}\label{linear_problem_202}
\langle{e}_t,\varphi \rangle
+
a_{e}(\kappa(\tilde{e}),{e},\varphi)
+
b_e(\bfu,{e},\varphi)
+
\gamma(\beta({e}),\varphi)
=
\langle g,\varphi\rangle
+
d(\bfu,\bfu,\varphi)
\end{equation}
for every $\varphi \in {W}^{1,2}$ and for almost every $t\in{I}$ and
\begin{equation}\label{linear_problem_204}
e(\bfx,0)=  e_0(\bfx) \;\textmd{ in }\Omega.
\end{equation}
Using
\eqref{embedding_theorems}
it is easy to see that the dissipative term on the right
hand side of \eqref{linear_problem_202} satisfies the estimate
\begin{eqnarray}\label{est:301}
\int_0^T
|
d(\bfu,\bfu,\varphi)
|
\,
{\rm d}t
&\leq&
\int_0^T
\|\mathbb{D}(\bfu)\|^2_{\mathbf{L}^{12/5}}
\|\varphi\|_{L^6}
{\rm d}t
\nonumber
\\
&\leq&
c\,
\|{\bfu}\|^2_{L^4(I;\mathbf{W}^{1,12/5})}
\|\varphi\|_{L^2(I;{W}^{1,2})}
\nonumber
\\
&\leq&
c\,
\|{\bfu}\|^2_{Y}
\|\varphi\|_{L^2(I;{W}^{1,2})},
\end{eqnarray}
where we have used \eqref{emb_160} and the embeddings
\begin{equation}\label{emb:sup1}
\mathbf{W}^{3/2,2} \hookrightarrow \mathbf{W}^{1,3} \hookrightarrow \mathbf{W}^{1,12/5}.
\end{equation}
Hence,
for given $\bfu \in Y$ and $\tilde{e}\in {L^{2}(I;L^{2})}$
we have
$$
\langle g,\varphi\rangle
+
d(\bfu,\bfu,\varphi)
\in
L^2({I};{W}^{-1,2})
$$
and the existence of the unique weak solution
${e} \in  L^2({I};{W}^{1,2}) \cap {\mathcal{C}({I};L^2)}$,
$e_t \in L^2({I};{W}^{-1,2}),$
to \eqref{linear_problem_202}--\eqref{linear_problem_204}
follows from Theorem \ref{linear_heat_problem}.

\medskip

Let $\mathcal{T}$ denote the mapping defined by the equation
$$
[\bfu,e] = \mathcal{T}([\tilde{\bfu},\tilde{e}]).
$$
We have shown that the mapping
$$
\mathcal{T}:X
\otimes
{L^{2}(I;L^{2})}
\rightarrow
X
\otimes
{L^{2}(I;L^{2})}
$$
is well defined. We now prove that $\mathcal{T}$ is continuous.
Let
$$
[\tilde{\bfu},\tilde{e}] \in X \otimes {L^{2}(I;L^{2})}
$$
and $[\tilde{\bfu}_n,\tilde{e}_n]$ be a sequence in $X \otimes {L^{2}(I;L^{2})}$
such that
$$
[\tilde{\bfu}_n,\tilde{e}_n] \rightarrow [\tilde{\bfu},\tilde{e}]
 \textmd{ in }
X \otimes {L^{2}(I;L^{2})}.
$$
Let  $[\bfu,e] = \mathcal{T}([\tilde{\bfu},\tilde{e}])$ and
$[\bfu_n,e_n] = \mathcal{T}([\tilde{\bfu}_n,\tilde{e}_n])$.
Writing \eqref{linear_problem_101} for $[\tilde{\bfu},\tilde{e}]$
and $[\tilde{\bfu}_n,\tilde{e}_n]$ separately and
subtracting their respective equations we get
(in view of \eqref{eq11a})
\begin{multline}\label{est:ineq100}
\|\bfu - \bfu_n\|_{Y}
\leq
c
\|( \varrho(\tilde{e})\bff,\cdot ) - ( \varrho(\tilde{e}_n)\bff,\cdot )
\\
-
b_u(\tilde{\bfu},\tilde{\bfu},\cdot)
+
b_u(\tilde{\bfu}_n,\tilde{\bfu}_n,\cdot)\|_{L^2({I};{V}_{{\Gamma_2}}^{0,2})}.
\end{multline}
Now, let us simply modify the right-hand side to obtain
\begin{multline}\label{est:ineq101}
\|( \varrho(\tilde{e})\bff,\cdot ) - ( \varrho(\tilde{e}_n)\bff,\cdot )
-
(b_u(\tilde{\bfu},\tilde{\bfu},\cdot)
-b_u(\tilde{\bfu}_n,\tilde{\bfu}_n,\cdot))\|_{L^2({I};{V}_{{\Gamma_2}}^{0,2})}
\\
\leq
\| (\varrho(\tilde{e})-\varrho(\tilde{e}_n))\bff,\cdot )\|_{L^2({I};{V}_{{\Gamma_2}}^{0,2})}
+
\| b_u(\tilde{\bfu},\tilde{\bfu}-\tilde{\bfu}_n,\cdot) \|_{L^2({I};{V}_{{\Gamma_2}}^{0,2})}
\\
+
\|  b_u(\tilde{\bfu}-\tilde{\bfu}_n,\tilde{\bfu}_n,\cdot)\|_{L^2({I};{V}_{{\Gamma_2}}^{0,2})}.
\end{multline}
For convective terms on the right-hand side in \eqref{est:ineq101} we can write
\begin{eqnarray}\label{est:ineq102}
\| b_u(\tilde{\bfu},\tilde{\bfu}-\tilde{\bfu}_n,\cdot) \|_{L^2({I};{V}_{{\Gamma_2}}^{0,2})}
&\leq&
\left(
\int_0^T \|\tilde{\bfu}\|^2_{\mathbf{L}^{24}}
\|\tilde{\bfu}-\tilde{\bfu}_n\|^2_{\mathbf{W}^{1,24/11}}{\rm d}t
\right)^{1/2}
\nonumber\\
&\leq&
\|\tilde{\bfu}\|_{L^4(I;\mathbf{L}^{24})}
\|\tilde{\bfu}-\tilde{\bfu}_n\|_{L^4(I;\mathbf{W}^{1,24/11})}
 \nonumber\\
&\leq&
c \, \|\tilde{\bfu}\|_{X}\|\tilde{\bfu}-\tilde{\bfu}_n\|_{X}
\end{eqnarray}
and
\begin{eqnarray}\label{est:ineq103}
\| b_u(\tilde{\bfu}-\tilde{\bfu}_n,\tilde{\bfu}_n,\cdot)\|_{L^2({I};{V}_{{\Gamma_2}}^{0,2})}
&\leq&
\left(
\int_0^T \|\tilde{\bfu}-\tilde{\bfu}_n\|^2_{\mathbf{L}^{24}}
\|\tilde{\bfu}_n\|^2_{\mathbf{W}^{1,24/11}}{\rm d}t
\right)^{1/2}
\nonumber
\\
&\leq&
c \,
\|\tilde{\bfu}-\tilde{\bfu}_n\|_{X}\|\tilde{\bfu}_n\|_{X}.
\end{eqnarray}
Combining \eqref{est:ineq100} with the latter estimates \eqref{est:ineq101}--\eqref{est:ineq103}
we deduce
\begin{multline}\label{est:ineq104a}
\|\bfu - \bfu_n\|_{Y}
\leq
c_1 \| (\varrho(\tilde{e})-\varrho(\tilde{e}_n))\bff,\cdot )\|_{L^2({I};{V}_{{\Gamma_2}}^{0,2})}
\\
+
c_2 ( \|\tilde{\bfu}\|_X + \|\tilde{\bfu}_n\|_X)
 \|\tilde{\bfu}-\tilde{\bfu}_n\|_X.
\end{multline}
In addition, in view of \eqref{est_emb_YX}, we have
\begin{multline}\label{est:ineq104b}
\|\bfu - \bfu_n\|_{X}
\leq
c_1 \| (\varrho(\tilde{e})-\varrho(\tilde{e}_n))\bff,\cdot )\|_{L^2({I};{V}_{{\Gamma_2}}^{0,2})}
\\
+
c_2 ( \|\tilde{\bfu}\|_X + \|\tilde{\bfu}_n\|_X)
 \|\tilde{\bfu}-\tilde{\bfu}_n\|_X.
\end{multline}
In \eqref{est:ineq104a} and \eqref{est:ineq104b},
$\| (\varrho(\tilde{e})-\varrho(\tilde{e}_n))\bff,\cdot )\|_{L^2({I};{V}_{{\Gamma_2}}^{0,2})}$
 converges to zero by the Lebesgue dominated convergence theorem.
Now, estimates \eqref{est:ineq104a} and \eqref{est:ineq104b} yield
\begin{equation}\label{conv:u}
\|\bfu - \bfu_n\|_{Y} \rightarrow 0
\;\textmd{  and  }\;
\|\bfu - \bfu_n\|_{X} \rightarrow 0
\end{equation}
provided
$$
[\tilde{\bfu}_n,\tilde{e}_n] \rightarrow [\tilde{\bfu},\tilde{e}]
\textmd{  in }
X \otimes {L^{2}(I;L^{2})}.
$$
We now turn, for a moment, to the energy
balance equation \eqref{linear_problem_102}.
Using the same procedure as before, writing \eqref{linear_problem_102} for
$[\tilde{\bfu},\tilde{e}]$ and $[\tilde{\bfu}_n,\tilde{e}_n]$, respectively,
and subtracting both resulting equations yield
\begin{multline*}%\label{est:ineq105}
\langle  (e-e_n)_t,\varphi \rangle
+
a_{e}(\kappa(\tilde{e}),{e}-e_n,\varphi)
+
\gamma(\beta({e})-\beta({e}_n),\varphi)
\\
=
a_e( \kappa(\tilde{e}_n)-\kappa(\tilde{e}) , {e}_n , \varphi )
+
b_e(\bfu_n - \bfu , {e} , \varphi)
-
b_e(\bfu_n , e_n - e , \varphi)
\\
+
d(\bfu+\bfu_n,\bfu-\bfu_n,\varphi).
\end{multline*}
The next step is to use $\varphi = e-e_n$ in order to obtain
\begin{multline}\label{est:ineq106}
\frac{d}{dt} \|e-e_n\|^2_{L^{2}}
+
\|\nabla(e-e_n)\|^2_{\mathbf{L}^{2}}
+
\gamma(\beta({e})-\beta({e}_n),e-e_n)
\\
\leq
c\left(
|(  a_e(\kappa(\tilde{e}_n)-\kappa(\tilde{e}) , {e}_n , e-e_n )|
+
|b_e(\bfu_n - \bfu , {e} , e-e_n)|
\right.
\\
+
|b_e(\bfu_n , e_n - e , e-e_n)|
\left.
+
|d(\bfu+\bfu_n,\bfu-\bfu_n,e-e_n) |
\right).
\end{multline}
Let us estimate all terms on the right-hand side in \eqref{est:ineq106}
to get
\begin{multline}\label{eq:601}
|a_e( \kappa(\tilde{e}_n)-\kappa(\tilde{e}) , {e}_n , e-e_n )|
\leq
\delta \| e-e_n \|^2_{W^{1,2}}
\\
+
C(\delta)
\|(\kappa(\tilde{e}_n) - \kappa(\tilde{e}) )\nabla{e}_n\|^2_{\mathbf{L}^2}
\end{multline}
and
\begin{eqnarray}\label{}
|b_e(\bfu_n - \bfu , {e} , e-e_n)|
&\leq&
\|\bfu_n - \bfu\|_{\mathbf{L}^4}
\|\nabla e\|_{\mathbf{L}^2}
\|e-e_n\|_{L^4}
\nonumber
\\
&\leq&
\delta \| e-e_n \|^2_{W^{1,2}}
+
C(\delta)\|\bfu_n - \bfu\|^2_{\mathbf{L}^4} \|e\|^2_{W^{1,2}},
\nonumber
\\
\end{eqnarray}
further
\begin{eqnarray}\label{}
|b_e(\bfu_n,e-e_n,e-e_n)|
&\leq&
\|\bfu_n\|_{\mathbf{L}^4}
\|\nabla (e-e_n)\|_{\mathbf{L}^2}
\|e-e_n\|_{L^4}
\nonumber
\\
&\leq&
\delta \| e-e_n \|^2_{W^{1,2}}
+
C(\delta)\|\bfu_n\|^4_{\mathbf{L}^4} \|e-e_n\|^2_{L^2}
\nonumber
\\
\end{eqnarray}
and finally
\begin{eqnarray}\label{eq:604}
|d(\bfu+\bfu_n,\bfu-\bfu_n,e-e_n) |
&\leq&
\delta \| e-e_n \|^2_{W^{1,2}}
\nonumber
\\
&&
+
C(\delta)\|
\mathbb{D}(\bfu+\bfu_n) : \mathbb{D}(\bfu-\bfu_n)
\|^2_{L^2}
\nonumber
\\
&\leq&
\delta \| e-e_n \|^2_{W^{1,2}}
\nonumber
\\
&&
+
C(\delta)
\|\bfu+\bfu_n\|^2_{\mathbf{W}^{1,4}}
\|\bfu-\bfu_n\|^2_{\mathbf{W}^{1,4}}
.
\nonumber
\\
\end{eqnarray}
In view of \eqref{monotony_beta},
choosing $\delta$ sufficiently small
and combining \eqref{est:ineq106} together with the estimates \eqref{eq:601}--\eqref{eq:604}
we deduce
\begin{eqnarray}\label{est:201}
\frac{d}{dt} \|e-e_n\|^2_{L^{2}}
+
\|e-e_n\|^2_{{W}^{1,2}}
&\leq&
\left( c_1 \|\bfu_n\|^4_{\mathbf{L}^4} + c_2 \right)\|e-e_n\|^2_{L^2}
\nonumber
\\
&&
+
c_3
\|(\kappa(\tilde{e}_n) - \kappa(\tilde{e}) )\nabla{e}_n\|^2_{\mathbf{L}^2}
\nonumber
\\
&&
+ c_4
\|\bfu_n - \bfu\|^2_{\mathbf{L}^4} \|e\|^2_{W^{1,2}}
\nonumber
\\
&&
+ c_5
\|\bfu+\bfu_n\|^2_{\mathbf{W}^{1,4}}\|\bfu-\bfu_n\|^2_{\mathbf{W}^{1,4}}.
\nonumber
\\
\end{eqnarray}
Applying the Gronwall's inequality to the estimate
\eqref{est:201} and the fact that $e(0)-e_n(0) =0$, we arrive at
\begin{equation}\label{gronwall}
\|e(t)-e_n(t)\|^2_{L^{2}}
\leq
\int_0^t \omega(s) {\rm d}s
\;
\exp\left( \int_0^t \chi(s) {\rm d}s \right)
\end{equation}
for all $0\leq t \leq T$, where
\begin{eqnarray*}%\label{eq700}
\chi(t)
&=&
c_1\|\bfu_n\|^4_{\mathbf{L}^4} + c_2,
\nonumber
\\
\omega(t)
&=&
c_3
\|(\kappa(\tilde{e}_n) - \kappa(\tilde{e}) )\nabla{e}_n\|^2_{\mathbf{L}^2}
+
c_4
\|\bfu_n - \bfu\|^2_{\mathbf{L}^4} \|e\|^2_{W^{1,2}}
\nonumber
\\
&&
+
c_5
\|\bfu+\bfu_n\|^2_{\mathbf{W}^{1,4}}\|\bfu-\bfu_n\|^2_{\mathbf{W}^{1,4}}.
%\nonumber
%\\
\end{eqnarray*}
Here,
by the Lebesgue dominated convergence theorem and \eqref{conv:u},
$\omega(t) \rightarrow 0$ for all $0\leq t \leq T$
as $n \rightarrow \infty$ and
by \eqref{gronwall} we deduce
\begin{equation}\label{conv:e}
\|e-e_n\|_{L^2(I,L^2)} \rightarrow 0.
\end{equation}
Finally, in view of
\eqref{conv:u} and \eqref{conv:e}
we conclude
\begin{equation*}
[{\bfu}_n,{e}_n] = \mathcal{T}([\tilde{\bfu}_n,\tilde{e}_n])
\rightarrow
\mathcal{T}([\tilde{\bfu},\tilde{e}])  =  [{\bfu},{e}].
\end{equation*}
Hence, $\mathcal{T}$ is continuous.
Using Theorem \ref{aubin_compact} and the embeddings
$$
W^{1,2}\hookrightarrow\hookrightarrow L^2 \hookrightarrow {W}^{-1,2}
$$
we have
\begin{equation}\label{emb:compact_ee}
\left\{\phi;\;
\phi \in  L^2({I};W^{1,2}),
\;
\phi_t  \in L^2({I}; {W}^{-1,2}),
\right\}
\hookrightarrow\hookrightarrow
{L^{2}(I;L^{2})}.
\end{equation}
By continuity of $\mathcal{T}$
and compact embeddings \eqref{comp_emb_YX} and \eqref{emb:compact_ee},
$\mathcal{T}$ is completely continuous.

We conclude the proof by deriving some estimates of $\bfu$ and $e$.
Applying \eqref{eq11a} to linear problem \eqref{linear_problem_101} with \eqref{linear_problem_103}
and taking into account \eqref{eq40} and \eqref{eq41}
we can write for $\bfu$ the estimate
\begin{multline}\label{est:u_Y}
\|\bfu_t\|_{L^2({I};{V}_{{\Gamma_2}}^{0,2})}
+
\|\bfu\|_{L^2({I};{D})}
+
\|\bfu\|_{\mathcal{C}({I};{V}_{{\Gamma_2}}^{1,2})}
\\
\leq
c
\left(
\varrho_2\|\bff\|_{L^2({I};{V}_{{\Gamma_2}}^{0,2})}
+
T^{1/8}\|\widetilde{\bfu}\|^2_{X}
+
\|\bfu_0\|_{{V}_{{\Gamma_2}}^{1,2}}
\right).
\end{multline}
Further, following Corollary \ref{corolary_stokes} we have
\begin{equation}\label{est:aux_u_X}
\|\bfu\|_{X}
\leq
C_S
\left(
\varrho_2\|\bff\|_{L^2({I};{V}_{{\Gamma_2}}^{0,2})}
+
T^{1/8}\|\widetilde{\bfu}\|^2_{X}
+
\|\bfu_0\|_{{V}_{{\Gamma_2}}^{1,2}}
\right).
\end{equation}
Let
$$
M
:=
\left\{
\bfv \in X, \;
\|\bfv\|_X
\leq
\frac{1}{2 C_S T^{1/8}}
\right\}
$$
and $\tilde{\bfu} \in M$.
Combining \eqref{main_cond} and \eqref{est:aux_u_X} we get
\begin{eqnarray*}%\label{est:u_M}
\|\bfu\|_{X}
&\leq&
C_S
\left(
\varrho_2\|\bff\|_{L^2({I};{V}_{{\Gamma_2}}^{0,2})}
+
T^{1/8}\|\widetilde{\bfu}\|^2_{X}
+
\|\bfu_0\|_{{V}_{{\Gamma_2}}^{1,2}}
\right)
\nonumber
\\
&\leq&
C_S
\left(
\frac{1}{4 C_S^2 T^{1/8}}
+
T^{1/8}\frac{1}{4 C_S^2 T^{1/4}}
\right) = \frac{1}{2 C_S T^{1/8}}.
\end{eqnarray*}
Hence, $\bfu \in M$.
Let us turn, for a moment, to the equation \eqref{linear_problem_202}
with the initial condition \eqref{linear_problem_204}
and derive some estimates of $e$.
One is allowed to use  $\varphi = e$ as a test function in
\eqref{linear_problem_202} to obtain
\begin{multline}\label{est:400}
\frac{1}{2}\frac{d}{dt}\|e(t)\|^2_{L^2}
+
a_{e}(\kappa(\tilde{e}(t)),{e}(t),e(t))
+
\gamma(\beta({e}(t)),e(t))
\\
\leq
|\langle g(t),e(t)\rangle|
+
|d(\bfu(t),\bfu(t),e(t))|
+
|b_e(\bfu(t),e(t),e(t))|
\end{multline}
almost everywhere in $I$.
We are going to estimate all terms on the right-hand
side of the latter inequality.
Evidently, we have
\begin{equation}\label{est:401}
|\langle g(t),e(t)\rangle| \leq \| g(t)\|_{W^{-1,2}} \|e(t)\|_{W^{1,2}}.
\end{equation}
The dissipative term $d$ can be estimated as in \eqref{est:301} to obtain
\begin{eqnarray}\label{est:403}
|d(\bfu(t),\bfu(t),e(t))|
&\leq&
\|\mathbb{D}(\bfu(t))\|^2_{\mathbf{L}^{12/5}}\|e(t)\|_{L^{6}}
\nonumber
\\
 &\leq&
 c\|{\bfu}(t)\|^2_{\mathbf{W}^{1,12/5}}\|e(t)\|_{{W}^{1,2}}.
\end{eqnarray}
The last term in \eqref{est:400}
can be handled using the interpolation inequality and the Young's inequality to get
\begin{eqnarray}\label{est:404}
|b_e(\bfu(t),e(t),e(t))|
&\leq&
\|\bfu(t)\|_{\mathbf{L}^4}
\|\nabla e(t)\|_{\mathbf{L}^2}
\|e(t)\|_{L^4}
\nonumber
\\
&\leq&
c
\|\bfu(t)\|_{\mathbf{L}^4}
\|e(t)\|^{7/4}_{W^{1,2}}
\|e(t)\|^{1/4}_{L^2}
\nonumber
\\
&\leq&
\delta\|e(t)\|^2_{W^{1,2}}
+C(\delta)
\|\bfu(t)\|^8_{\mathbf{L}^4}
\|e(t)\|^2_{L^2}.
\end{eqnarray}
By virtue of \eqref{monotony_beta} and the fact that $\beta(0)=0$
we have $\gamma(\beta({e}(t)),e(t)) \geq 0$.
Hence, choosing $\delta$ sufficiently small in \eqref{est:404}
and combining \eqref{est:400}--\eqref{est:404} we arrive at
\begin{multline*}%\label{est:energyheat2}
\frac{d}{dt}\|e(t)\|^2_{L^2}
+
\|e(t)\|^2_{W^{1,2}}
\leq
\left( c_1 \|\bfu(t)\|^8_{\mathbf{L}^4} + c_2 \right) \|e(t)\|^2_{L^2}
\\
+
c_3
\left(
\| g(t)\|_{W^{-1,2}}
+
\|{\bfu}(t)\|^2_{\mathbf{W}^{1,12/5}}
\right)^2 .
\end{multline*}
Using the Gronwall's inequality yields
\begin{multline}\label{est:energyheat4}
\|e(t)\|^2_{L^2}
\leq
\exp\left(
\int_0^t
c_2
\|\bfu(s)\|^8_{\mathbf{L}^4}
+
c_3
\,{\rm d}s
\right)
\left[
\|e_0\|^2_{L^2}
\right.
\\
\left.
+
\int_0^t
c_1
\left(
\| g(s)\|_{W^{-1,2}}
+
\|{\bfu}(s)\|^2_{\mathbf{W}^{1,12/5}}
\right)^2
\,{\rm d}s
\right]
\end{multline}
for all $t \in I$. In view of \eqref{est:u_Y}
and the fact that $\tilde{\bfu} \in M$ we have
\begin{equation}\label{est_u_Y}
\|\bfu\|_{Y} \leq \frac{c}{2 C_S^2 T^{1/8}}.
\end{equation}
Therefore
\begin{eqnarray}\label{est:sup001}
\int_0^T
c_1
\|\bfu(s)\|^8_{\mathbf{L}^4}
+
c_2
\,{\rm d}s
& \leq &
c_1 \|\bfu\|^8_{\mathcal{C}(I,\mathbf{L}^4)} T + c_2 T
\nonumber
\\
& \leq &
c_1 \|\bfu\|^8_{Y} T + c_2 T
\nonumber
\\
& \leq &
c_1  + c_2 T.
\end{eqnarray}
Hence, combining \eqref{est:energyheat4}--\eqref{est:sup001} we can write
\begin{multline}\label{est:energyheat5}
\|e\|^2_{\mathcal{C}(I,L^2)}
\leq
\left[
\|e_0\|^2_{L^2}
+
c_1
\left(
\| g\|^2_{L^2(I,W^{-1,2})}
\right.
\right.
\\
\left.
\left.
+
\|{\bfu}\|^4_{L^4(I,\mathbf{W}^{1,12/5})}
\right)
\right]
\exp\left(
c_1  + c_2 T
\right).
\end{multline}
Now, combining \eqref{emb_160}, \eqref{emb:sup1} and \eqref{est_u_Y},
we deduce that $e$ satisfies the \textit{a priori} estimate of the general form
\begin{equation}\label{est:apriori_e}
\|e\|_{L^{2}({I};{L}^{2})}
\leq
T^{1/2}
\|e\|_{\mathcal{C}({I};{L}^{2})}
\leq
C(\Omega,T,C_S,\|e_0\|_{L^2},\| g\|_{L^2(I,W^{-1,2})}).
\end{equation}
We note that, by the \textit{a priori} estimate \eqref{est:apriori_e},
$e$ is bounded in $L^{2}({I};{L}^{2})$ independently of $\tilde{\bfu}$ and $\tilde{e}$.
We have previously shown that $\tilde{\bfu} \in M$ implies $\bfu \in M$.
Hence,  there exists a fixed ball $B \subset X \otimes L^2(I;L^2)$ defined by
$$
B
:=
\left\{
[\bfv,z ]\in X \otimes L^{2}({I};{L}^{2}), \;
\|\bfv\|_X
\leq
\frac{1}{2 C_S T^{1/8}},
\;
\|z\|_{L^{2}({I};{L}^{2})} \leq R
\right\}
$$
($R>0$ sufficiently large)
such that $\mathcal{T}(B)\subset B$, where the operator
$$
\mathcal{T}: X \otimes L^2(I;L^2) \rightarrow X \otimes L^2(I;L^2)
$$
is completely continuous.
Now, by Theorem \ref{th:Schauder}, there exists
the strong-weak solution $[\bfu,{e}]$ to problem
\eqref{eq3}--\eqref{init_temp}.
This completes the proof of the main result.
$\square$
\end{proof}

\begin{rem}
Let us explicitly note that, in view of  \eqref{main_cond}, we do not prescribe any ``smallness'' assumptions
on the norms $\|g\|_{L^2({I}; {W}^{-1,2})}$ and
$\|e_0\|_{L^{2}}$.
\end{rem}

% ------------------------------------------------------------------------

\subsection*{Acknowledgment}
The first author of this work has been supported by the project GA\v{C}R 13-18652S.
The second author of this work has been supported by the \emph{Croatian Science Foundation} (scientific project 3955: \emph{Mathematical modeling and numerical simulations of processes in thin or porous domains}).

% ------------------------------------------------------------------------

\begin{thebibliography}{1}
%\bibitem{test} A. B. C. Test, \textit{On a Test.} J. of Testing
%\textbf{88} (2000), 100--120.
%\bibitem{latex} G. Gr\"atzer, \textit{Math into \LaTeX.} 3rd Edition,
%Birkh\"auser, 2000.


\bibitem{AdamsFournier1992}
Adams,~A., Fournier,~J.F.:
\textit{Sobolev spaces.}
Pure and Applied Mathematics 140, Academic Press (1992)


\bibitem{Aubin1963}% article
Aubin,~J.P.:
\textit{Un th\'{e}or\`{e}me de compacit\'{e}.}
{C. R. Acad. Sci. Paris}
\textbf{256}, 5042--5044 (1963)


\bibitem{Benes2009}
Bene\v{s},~M.:
\textit{Solutions to the Mixed Problem of Viscous
Incompressible Flows in a Channel.}
Arch. Math.
\textbf{93}, 287--297 (2009)


\bibitem{Benes2011c}
Bene\v{s},~M.:
\textit{Mixed initial-boundary value problem for the three-dimensional
Navier-Stokes equations in polyhedral domains.}
Discrete Cont. Dyn. S. suppl.
\textbf{1}, 135--144 (2011)


\bibitem{Benes2014a}
Bene\v{s},~M.:
\textit{The ``do nothing'' problem for Boussinesq fluids.}
Appl. Math. Lett.
\textbf{31}, 25--28 (2014)


\bibitem{Benes2014b}
Bene\v{s},~M.:
\textit{A note on regularity and uniqueness of natural
convection with effects of viscous dissipation in 3D open channels.}
Z.  Angew. Math. Phys.
\textbf{65}, 961--975 (2014)


\bibitem{BeKuc2014}
Bene\v{s},~M., Ku\v{c}era,~P.:
\textit{Solutions to the Navier-Stokes
Equations with Mixed Boundary Conditions in Two-Dimensional Bounded
Domains.}
submitted for publication,
available online at: http://arxiv.org/abs/1409.4666


\bibitem{cwikel}
Cwikel,~M.:
\textit{Real and complex interpolation and extrapolation of compact operators.}
Duke Math. J.
\textbf{65}, 333--343 (1992)


\bibitem{diaz2007}
D\'{i}az,~J.I., Rakotoson,~J.M., Schmidt,~P.G.:
\textit{Mathematical issues
concerning the Boussinesq approximation for thermally coupled
viscous flows.}
PAMM
\textbf{7}, 1101205--1101206 (2007)

\bibitem{frehse}
Frehse,~J.:
\textit{A discontinuous solution of mildly nonlinear elliptic
systems.}
Math. Z.
\textbf{134}, 229--230 (1973)

\bibitem{GaRaRoTu}
Galdi,~G.P., Rannacher,~R., Robertson,~A.M., Turek, S.:
\textit{Hemodynamical Flows.
Modeling, Analysis and Simulation.}
Oberwolfach Seminars 37,
Birkhauser (2008)


\bibitem{GiTru}% book
Gilbarg,~D., Trudinger,~N.S.:
\textit{Elliptic Partial Differential Equations of Second Order.}
Springer (2001)


\bibitem{Gresho}
Gresho,~P.M.:
\textit{Incompressible fluid dynamics: some fundamental formulation issues.}
Ann. Rev. Fluid Mech.
\textbf{23}, 413--453 (1991)

\bibitem{Hey}
Heywood,~J.G., Rannacher,~R., Turek,~S.:
\textit{Artificial boundaries and flux
and pressure conditions for the incompressible Navier--Stokes
equations.}
Intern. J. Num. Meth. in Fluids
\textbf{22}, 325--352 (1996)


\bibitem{Kagei2000}
Kagei,~Y., Ru\v{z}i\v{c}ka,~M., Th\"{a}ter,~G.:
\textit{Natural Convection with
Dissipative Heating.}
Commun. Math. Phys.
\textbf{214}, 287--313 (2000)

\bibitem{Kracmar2002}
Kra\v{c}mar,~S.:
\textit{Channel flows and steady variational inequalities of
the Navier--Stokes type.}
Vy\v{c}islitelnye technologii
\textbf{7}, 83--95 (2002)

\bibitem{KraNeu2}
Kra\v{c}mar,~S., Neustupa,~J.:
\textit{Modelling of flows of a viscous
incompressible fluid through a channel by means of variational
inequalities.}
ZAMM
\textbf{74}, 637--639 (1994)

\bibitem{KraNeu5}
Kra\v{c}mar,~S., Neustupa,~J.:
\textit{A weak solvability of a steady
variational inequality of the Navier-Stokes type with mixed boundary
conditions.}
Nonlinear Anal.
\textbf{47}, 4169--4180 (2001)

\bibitem{Kucera2009}
Ku\v{c}era,~P.:
\textit{Basic properties of solution of the non-steady
Navier-Stokes equations with mixed boundary conditions in a bounded
domain.}
Ann. Univ. Ferrara
\textbf{55}, 289--308 (2009)

\bibitem{KuceraSkalak1998}
Ku\v{c}era,~P., Skal\'{a}k,~Z.:
\textit{Solutions to the Navier-Stokes Equations
with Mixed Boundary Conditions.}
Acta Appl. Math.
\textbf{54}, 275--288 (1998)

\bibitem{KufFucJoh1977}
Kufner,~A., John,~O., Fu\v{c}\'{i}k,~S.:
\textit{Function Spaces.}
Academia (1977)

\bibitem{LadUr}
Ladyzhenskaya,~O.A., Uraltseva,~N.N.:
\textit{Linear and quasilinear equations of elliptic type.}
Nauka, Moscow, 1964, English translation:
Academic Press, New York (1968)

\bibitem{LadSolUr}
Ladyzhenskaya,~O.A., Solonnikov,~V.A., Uraltseva,~N.N.:
\textit{Linear and quasilinear equations of parabolic type.}
Translations of Mathematical Monographs 23,
American Mathematical Society, Providence, R.I. (1967)

\bibitem{Pazanin1}
Maru\v{s}i\'{c}-Paloka,~E., Pa\v{z}anin,~I.:
\textit{Non-isothermal fluid flow through a thin pipe with cooling.}
Appl.~Anal.
\textbf{88}, 495--515 (2009)

\bibitem{Pazanin2}
Maru\v{s}i\'{c}-Paloka,~E., Pa\v{z}anin,~I.:
\textit{On the effects of curved geometry on heat conduction through a distorted pipe.}
Nonlinear Anal.~RWA
\textbf{11}, 4554--4564 (2010)


\bibitem{Roubicek2005}
Roub\'{i}\v{c}ek,~T.:
\textit{Nonlinear Partial Differential Equations With Applications.}
Birkh\"{a}user (2005)



\bibitem{SaniGresho}
Sani,~R.L., Gresho,~P.M.:
\textit{R\'{e}sum\'{e} and remarks on the open boundary condition minisymposium.}
Intern. J. Num. Meth. in Fluids
\textbf{18}, 983-1008 (1994)

\bibitem{SkalakKucera2000}
Skal\'{a}k,~Z., Ku\v{c}era,~P.:
\textit{An existence theorem for the Boussinesq
equations with non-Dirichlet boundary conditions.}
Appl. Math.
\textbf{45}, 81--98 (2000)






\end{thebibliography}
\end{document}